\newlength\bshft
\def\fakebold#1{\ThisStyle{\ooalign{$\SavedStyle#1$\cr%
\kern-\bshft$\SavedStyle#1$\cr%
\kern\bshft$\SavedStyle#1$}}}
\newcommand{\abs}[1]{\left\vert {#1} \right\vert}
\newcommand{\defeq}{\vcentcolon=}
\DeclareMathOperator{\dofop}{{\rm dof}}
\DeclareMathOperator{\diver}{{\rm div}}
\DeclareMathOperator{\rot}{{\rm rot}}
\DeclareMathOperator{\ROT}{{\mathbf{rot}}}
\newcommand{\gv}{\boldsymbol{g}}
\newcommand{\xx}{\boldsymbol{x}}
\newcommand{\xxp}{\boldsymbol{x}^{\perp}}
\newcommand{\mmp}{\boldsymbol{m}^{\perp}}
\newcommand{\uu}{\boldsymbol{u}}
\newcommand{\vv}{\boldsymbol{v}}
\newcommand{\ww}{\boldsymbol{w}}
\newcommand{\nn}{\boldsymbol{n}}
\newcommand{\VV}{\boldsymbol{V}}
\newcommand{\VVb}{{\protect\fakebold{{\mathbb{V}}}}}
\newcommand{\K}{\kappa}
\newcommand{\R}{\mathbb{R}}
\newcommand{\Pk}{\mathbb{P}}
\newcommand{\Pkt}{\widetilde{\mathbb{P}}}
\newcommand{\Mk}{\mathcal{M}}
\newcommand{\MMk}{\boldsymbol{\Mk}}
\newcommand{\Mkt}{\widetilde{\mathcal{M}}}
\newcommand{\Ie}{\mathfrak{e}}
\newcommand{\St}{\mathcal{S}^E}
\newcommand{\Ed}{\mathcal{E}_h}
\newcommand{\Edi}{\mathcal{E}_h^{\rm int}}
\newcommand{\Edb}{\mathcal{E}_h^{\rm ext}}
\newcommand{\EdE}{\mathcal{E}_h^{E}}
\newcommand{\Ede}{\mathcal{E}_h^{\partial_e \Omega}}
\newcommand{\Edn}{\mathcal{E}_h^{\partial_n \Omega}}
\newcommand\blfootnote[1]{%
  \begingroup
  \renewcommand\thefootnote{}\footnote{#1}%
  \addtocounter{footnote}{-1}%
  \endgroup
}
\newtheorem{lemma}{Lemma}[section]
\newtheorem{prop}{Proposition}[section]
\newtheorem{remark}{Remark}[section]
\newtheorem{problem}{Problem}
\newtheorem{dof}{Degrees of freedom}
\newtheorem{property}{Property}
\newtheorem{assumption}{Assumption}
\title{The Mixed Virtual Element Method\\ on curved edges in two dimensions}
\author{Franco Dassi$^*$  \and Alessio Fumagalli$^\dagger$ \and Davide Losapio$^\dagger$ \and Stefano Scial\`o$^\diamondsuit$ \and Anna Scotti $^\dagger$ \and  Giuseppe Vacca$^*$}
\begin{document}

\maketitle    

\begin{abstract}
    In this work, we propose an extension of the mixed Virtual Element Method
    (VEM) for bi-dimensional computational grids with curvilinear edge elements.
    The approximation by means of rectilinear edges of a domain with curvilinear
    geometrical feature, such as a portion of domain boundary or an internal
    interface, may introduce a geometrical error that degrades the expected
    order of convergence of the scheme. In the present work a suitable VEM
    approximation space is proposed to consistently handle curvilinear
    geometrical objects, thus recovering optimal convergence rates.  The
    resulting numerical scheme is presented along with its theoretical analysis
    and several numerical test cases to validate the proposed approach.
\end{abstract}

\blfootnote{$^*$Dipartimento di Matematica e Applicazioni, Universit\`a degli Studi di Milano Bicocca, Via Roberto Cozzi 55 - 20125 Milano, Italy. \texttt{franco.dassi@unimib.it  } \texttt{giuseppe.vacca@unimib.it} \\
$^\dagger$ MOX - Dipartimento di Matematica, Politecnico di Milano, via Bonardi 9, 20133 Milano, Italy \texttt{davide.losapio@polimi.it   }\texttt{alessio.fumagalli@polimi.it   }\texttt{anna.scotti@polimi.it}\\
$^\diamondsuit$ Dipartimento di Scienze Matematiche, Politecnico di Torino, Corso Duca degli Abruzzi 24, 10129 Torino, Italy \texttt{stefano.scialo@polito.it}}

\section{Introduction}

The present work proposes an extension of the Mixed Virtual Element meth\-od for
meshes with elements having curved edges, for bi-dimensional elliptic
problems in mixed form. The method allows to handle domains with curved
boundaries, or domains with embedded curved interfaces, or even  with mesh elements having all curved edges.

Mixed methods are well suited for the discretization of  vector field in
$H(\text{div})$. Classes of mixed methods, in addition to the here considered
Mixed Virtual Element Methods (MVEM)~\cite{BeiraodaVeiga2014b} are the well
known Raviart-Thomas (RT) ~\cite{Raviart1977,Roberts1991,Arnold2005,Boffi2013}
and Brezzi-Douglas-Marini
(BDM)~\cite{Brezzi1985,Nedelec1986,Brezzi1987,Boffi2013} finite element schemes.

Dealing with curved boundaries/interfaces for approximation degrees greater than
one has some pitfalls. Indeed, as the polynomial accuracy increases, the
geometrical error due to the approximation of curved boundaries or interfaces
via piece-wise linear edges dominates the numerical error of the scheme, thus
bounding the convergence rate.

Curved edge discretizations have been investigated in the virtual element
framework for the first time in~\cite{BeiraodaVeiga2019} where an elliptic
bi-dimensional problem in primal formulation is considered. The proposed
approach is based on standard virtual elements (VEM) and it is well suited for
problems where the computational domain is characterized by fixed curved
boundaries or interfaces. After this pioneering work, other strategies have been
proposed to extended the VEM to curved edge elements.  In~\cite{Bertoluzza2019},
for example, the authors keep the standard definition of VEM spaces and suggest
properly modified bi-linear forms to take into account elements with curved
boundaries. In~\cite{brezziCurvo} the virtual element space proposed
by~\cite{BeiraodaVeiga2019} is modified to contain polynomials. Such extension
is crucial to preserve convergence rates when the mesh element diameter
decreases while boundary curvature remains fixed.  

Other important classes of
methods have been developed to handle curved edges: isogeometric
analysis~\cite{Hughes2005,Bazilevs2006,Montardini2017}, non-affine isoparametric
elements~\cite{Ciarlet1972,Zlamal1973,Lenoir1986} and also Mimetic Finite
Differences~\cite{Brezzi:2006} or Hybrid High Order schemes~\cite{dipietro}.

The main advantage of VEM based approaches for curved edge elements lies in the
possibility of exactly reproducing the curved interface or domain boundary
without introducing any geometrical approximation, provided a suitable
parametric description of the curve is available. The use of mixed
discretizations, further, is particularly well suited for problems where local
mass conservation is of paramount importance. These features, combined with the
great flexibility of VEM make the proposed approach
particularly well suited for single or multi-phase flow problems in
heterogeneous porous media, with or without the presence of fractures, for the
analysis of absorbing materials, or composite materials, or materials with
inclusions of arbitrary shapes. Indeed,  these applications are characterized by
complex domains with arbitrary shape interfaces, or multiple intersecting
interfaces, and coefficients with strong variations. Examples of applications of
the MVEM with rectilinear edge meshes can be found, e.g.
in~\cite{BeiraoVeiga2016,Benedetto2016,Fumagalli2016a,Fumagalli2017,Benedetto2017,Fumagalli2017a,Dassi2019,hyperVEM,pichlerVEM,Fumagalli2020b}.

Here the MVEM is extended to curved edge elements following the approach proposed
in~\cite{BeiraodaVeiga2019}.  Concerning the choice of the degrees of
freedom, the proposed scheme can be seen as a generalization of RT elements to
order $k\geq 0$ to curved edges. Moreover, the new scheme is an extension to
the curved case of the classical virtual mixed spaces.  Indeed, when the domain has
no curved boundaries or interfaces, the proposed virtual spaces boil down to the
spaces defined in~\cite{BeiraodaVeiga2014b,BeiraoVeiga2016}, with a slightly
different choice of the degrees of freedom, that is particularly suited for curved elements.

{The paper is organized as follows.
In Section~\ref{sec:mathematical_model} we discuss some technical details as
notations, mathematical model and hypothesis on curved edges.
In Section~\ref{sec:discrete_spaces} we present the mesh assumptions, introduce the
discrete spaces, with the associated set of degrees of freedom and define the
discrete bilinear forms, then we present the discrete problem.  In
Section~\ref{sec:theory} we analyse the theoretical properties of the proposed
method: we  introduce the Fortin operator, we establish the discrete inf-sup
condition and provide the interpolation estimate for the curved MVEM. Then we
prove the stability bounds for the associated discrete bilinear form. 
At the end of this section we recover the optimal order of convergence of the present method. 
In Section~\ref{sec:numExe} we provide some experiments to give numerical evidence
of the behaviour
of the proposed scheme. Finally, Section \ref{sec:conclusion} is devoted to
conclusion.}

\section{Notations and Preliminaries}\label{sec:mathematical_model}

{Throughout the paper, we will follow the usual notation for Sobolev spaces
and norms as in \cite{Adams:1975}.
Hence, for a bounded domain $\omega$,
the norms in the spaces $W^s_p(\omega)$ and $L^p(\omega)$ are denoted by
$\|{\cdot}\|_{W^s_p(\omega)}$ and $\|{\cdot}\|_{L^p(\omega)}$ respectively.
Norm and seminorm in $H^{s}(\omega)$ are denoted respectively by
$\|{\cdot}\|_{s,\omega}$ and $|{\cdot}|_{s,\omega}$,
while $(\cdot,\cdot)_{\omega}$ and $\|\cdot\|_{\omega}$ denote the $L^2$-inner product and the $L^2$-norm (the subscript $\omega$ may be omitted when $\omega$ is the whole computational
domain $\Omega$).
Moreover with a usual notation, the symbols $\nabla$, $\Delta$ denote the gradient and Laplacian for scalar functions, while $\diver$ denotes the divergence for vector fields.
Furthermore, for a scalar function $\psi$ and a vector field
$\vv = (v_1, v_2)$ we set
\[
\ROT \psi \defeq \left(\frac{\partial \psi}{\partial y}\,, - \frac{\partial \psi}{\partial x}  \right)^\top \,,
\qquad
\rot \vv \defeq \frac{\partial v_2}{\partial x} - \frac{\partial v_1}{\partial y} \,.
\]
Finally we recall the following well known functional spaces which will be useful in the sequel
\begin{gather*}
    H(\diver, \omega) \defeq \{\bm{v} \in [L^2(\omega)]^2: \,   \diver
    \bm{v} \in L^2(\omega)\} \,,
    \\
    H(\rot, \omega) \defeq \{\bm{v} \in [L^2(\omega)]^2 :\,
    \rot \bm{v} \in L^2(\omega)\} \,.
\end{gather*}
}

\subsection{Mathematical model}
We consider a
(curved) domain $\Omega\subset\mathbb{R}^2$ with Lipschitz continuous boundary and external unit normal $\nn$. The boundary of $\Omega$ named
$\partial \Omega$ is divided into two parts $\partial_e \Omega$ and $\partial_n
\Omega$ such that $\overline{\partial \Omega} = \overline{\partial_e \Omega}
\cup \overline{\partial_n \Omega}$ and $\mathring{\partial_e \Omega} \cap
\mathring{\partial_n \Omega} = \emptyset$. For simplicity, we assume that
$\mathring{\partial_n \Omega} \neq \emptyset$.

For a positive definite tensor $\K$, a positive real number $\mu$, and a scalar source $f$, the following problem is set in $\Omega$:
\begin{problem}[Model problem]\label{pb:darcy_model}
    Find $(\bm{q}, p)$ such that
    \begin{subequations}
    \begin{align}\label{eq:darcy_model_eq}
        \left \{
        \begin{aligned}
            &\mu \bm{q} + \K \nabla p = \bm{0} \\
            &\diver \bm{q} + f = 0
        \end{aligned}
        \right .
        \qquad \text{in } \Omega,
    \end{align}
    supplied with the following boundary conditions
    \begin{equation}
    \label{eq:darcy_model}
    \left \{
    \begin{aligned}
        &p = \overline{p} & \qquad \text{on } \partial_n \Omega,\\
        &\bm{q} \cdot \bm{n} = \overline{q} & \qquad \text{on } \partial_e \Omega.
    \end{aligned}
    \right .
    \end{equation}
    \end{subequations}
\end{problem}
This problem describes, for example, the pressure $p$ and the Darcy velocity $\bm{q}$ of a single phase fluid in a porous medium, characterized by a permeability tensor $\K$, a fluid dynamic viscosity $\mu$, and fluid sinks/sources $f$.
%
{
In the following we assume null $\overline{q}$, otherwise a lifting technique should be considered.
Before introducing the weak problem associated to Problem \ref{pb:darcy_model},
we fix the following notation
\[
\VV \defeq \left\{ \bm{v} \in H(\diver, \Omega)
    \quad \text{s.t.} \quad \bm{v} \cdot \bm{n} = 0 \text{ on } \partial_e \Omega
    \right\}
\quad \text{and} \quad
Q \defeq L^2(\Omega) \,,
\]
equipped with natural inner products and induced norms.
}
The spaces $\VV$ and $Q$, with their structures, are thus Sobolev spaces.
In the previous definition of $\VV$ the condition on the essential part of
$\partial \Omega$ can be detailed as:
\begin{gather*}
    \langle \bm{v} \cdot \bm{n}, w \rangle = 0 \quad \forall w \in
    H^{\frac{1}{2}}_{00}(\partial_e \Omega)
\end{gather*}
where $\langle\cdot, \cdot\rangle$ is the duality pair from
$H^{-\frac{1}{2}}(\partial_e \Omega)$ to $H^{\frac{1}{2}}_{00}(\partial_e
\Omega)$. See \cite{Boffi2013} for more details.

We introduce now the weak formulation of Problem \ref{pb:darcy_model}.
The procedure is rather standard and leads to the definition of the following forms
\begin{align}\label{eq:continuous_forms}
    \begin{aligned}
        &a(\cdot, \cdot)\colon \VV \times \VV \to \mathbb{R} \qquad
        &&a(\bm{u}, \bm{v}) \defeq (\mu \K^{-1} \bm{u}, \bm{v})_\Omega \quad &\forall (\bm{u},
        \bm{v}) \in \VV \times \VV\\
        &b(\cdot, \cdot)\colon \VV \times Q \to \mathbb{R} \qquad
        &&b(\bm{u}, v) \defeq -(\diver \bm{u}, v)_\Omega \quad &\forall(\bm{u}, v)\in \VV
        \times Q.
    \end{aligned}
\end{align}
We have furthermore assumed that $\kappa \in
[L^\infty(\Omega)]^{2\times2}$, $\mu \in
L^\infty(\Omega)$ and it exists $\mu_0 > 0$ such that $\mu \geq \mu_0$.
Linear functionals associated to given data are defined as
\begin{align}\label{eq:linear_functionals}
    \begin{aligned}
    &G(\cdot)\colon \VV \to \mathbb{R} \quad
    &&G(\bm{v})\defeq  - (\overline{p}, \bm{v} \cdot \bm{n})_{\partial_n\Omega}
    \quad &\forall \bm{v} \in \VV\\
    &F(\cdot) \colon Q\to \mathbb{R} \quad
    &&F(s) \defeq (f, v)_\Omega \quad &\forall v \in Q,
    \end{aligned}
\end{align}
where the data have regularity $\overline{p} \in
H^{\frac{1}{2}}_{00}(\partial_n\Omega)$, and $f \in L^2(\Omega)$. We can finally
summarize the weak formulation of Problem \ref{pb:darcy_model} as the following.
\begin{problem}[Weak problem]\label{pb:darcy_weak}
    Find the couple Darcy velocity and pressure $(\bm{q}, p)~\in~\VV~\times~Q$ such that
    \begin{align}
       \left \{
        \begin{aligned}
            & a(\bm{q}, \bm{v}) + b(\bm{v}, p) = G(\bm{v}) && \forall \bm{v} \in \VV
            \\
            & b(\bm{q}, v) = F(v) && \forall v \in Q.
        \end{aligned}
         \right.
    \end{align}
\end{problem}
\noindent
The previous Problem is well posed (see for instance \cite{Boffi2013}).

{
\subsection{Assumptions on the curved domains}

Following the approach in \cite{BeiraodaVeiga2019}, we here detail the assumption
on the (curved) domain $\Omega$.  We consider a bounded Lipschitz domain $\Omega$
whose boundary $\partial \Omega$ is made up of a finite number of smooth curves
$\{\Gamma_i\}_{i=1, \dots, N}$ that fit the boundary split into  ``essential''
and  ``natural'' part, i.e.,
$$
\bigcup_{i=1}^{N_e} \Gamma_i = \partial_e \Omega\qquad\text{and}\qquad
\bigcup_{i=N_e+1}^{N} \hspace{-0.5em}\Gamma_i = \partial_n \Omega.
$$
We assume that:
\begin{assumption}[Boundary regularity]\label{ass:regu}
We assume that each curve $\Gamma_i$ of $\partial \Omega$ is sufficiently smooth, for instance we require that $\Gamma_i$ is of class $C^{m+1}$
with $m \geq 0$, i.e., there exists a given regular and invertible $C^{m+1}$-parametrization
$\gamma_i \colon I_i \to \Gamma_i$ for $i=1, \dots, N$, where  $I_i \defeq [a_i, b_i] \subset \R$ is a closed interval.
\end{assumption}
Since all the parts $\Gamma_i$ of $\partial \Omega$ will be treated in the same
way, in the following we will drop the index $i$ from all the involved maps and
parameters, in order to obtain a lighter notation.
}

\begin{remark}[Internal interfaces]
    It is important to note that proposed approach is also valid for internal curved interfaces.
    However, to keep the presentation simple we assume only curved elements on the boundary, being
    its extension straightforward. Examples in Subsection \ref{sub:inside1} and
    \ref{sub:inside2} deal with internal interfaces.
\end{remark}

{
\section{Mixed Virtual Elements on curved polygons}\label{sec:discrete_spaces}

In this section, we define the virtual formulation of Problem
\ref{pb:darcy_weak}.  We first discuss the assumptions for the meshes on the
curved domain $\Omega$, then we introduce the space for the vector and scalar
fields with the associated set of degrees of freedom. We discuss the
computability of the $L^2$-projection onto the polynomial space and define the
approximated linear form.

\subsection{Mesh assumptions}
From now on, we will denote with $E$ a general polygon having $\ell_e$ edges
$e$, which may any number of curved edges.
For each polygon $E$ and each edge $e$ of $E$ we denote
by $|E|$, $h_E$, $\xx_E=(x_E, y_E)$ the measure, diameter and centroid of $E$,
respectively.
By $h_e$, $\xx_e$  we denote the length and midpoint of $e$, respectively.
Furthermore, $\nn_E^e$ denotes the unit outward normal
vector to $e$ with respect to $E$, while
$\nn_E$ is a generic outward normal of $\partial E$.
We call $\nn^e$ a fixed unit normal vector which is normal to the edge $e$ and
$\sigma_{E,e} \defeq \nn_E^e \cdot \nn^e= \pm 1$ (notice that $\nn^e$ does not depend
on $E$).

Let $\Omega_h$ be a decomposition of $\Omega$ into general polygons $E$ completed along
$\partial \Omega$  by curved elements whose boundary contains an arc $\subset \partial \Omega$, where
we define $h~\defeq ~\sup_{E \in \Omega_h} h_E$, see~\cite{BeiraodaVeiga2019}. 
We make two assumptions on the mesh elements: there exists a positive uniform constant $\rho$ such that
\begin{assumption}[Star-shaped]\label{ass:star}
    Each element $E$ in $\Omega_h$ is star-shaped with respect to a
    ball $B_E$ of radius $ \geq\, \rho \, h_E$.
\end{assumption}
\begin{assumption}[Edges comparable size]\label{ass:mesh}
    For each element $E$ in $\Omega_h$, for any (possibly curved) edge
    $e$ of $E$, it holds $h_e \geq \rho \, h_E$.
\end{assumption}
We denote by $\Ed$ the set of all the mesh edges divided into internal
$\Edi$ and external $\Edb$ edges; the latter is split  into  ``essential edges'' $\Ede$  and  ``natural edges'' $\Edn$. For any $E \in \Omega_h$ we denote by $\EdE$ the set of the edges of $E$.
Finally the total number of edges (excluding the ``essential edges'' $\Ede$) and elements in the decomposition $\Omega_h$ are denoted by $L_e$ and $L_E$, respectively.

With a slight abuse of notation, we define the following maps to deal with both straight and curved edges:
\begin{itemize}
\item for any curved edge $e \in \Ed$, we call $\gamma \colon \Ie \subset I \to e$  the restriction of $\gamma \colon I \to \partial \Omega$ having image $e$,
\item for any straight edge $e \in \Ed$  with endpoints $\xx_{e_1}$ and $\xx_{e_2}$,
we denote by $\gamma \colon \Ie \defeq [0, h_e] \to e$ the standard affine map
$\gamma(t) = \frac{t}{h_e}(\xx_{e_2} - \xx_{e_1}) + \xx_{e_1}$.
\end{itemize}

\begin{remark} \label{rm:length}
We notice that, since the parametrization $\gamma \colon I \to \partial \Omega$
is fixed once and for all, under Assumption \ref{ass:regu}, it follows
that for any curved edge $e \in \EdE$, the length of the interval $\Ie$ is
comparable with the diameter $h_E$ of the element $E$,
since $h_e = \int_{\Ie} \|\gamma'(s)\| \, {\rm d}s$ and $\gamma$, $\gamma^{-1} \in W^{1, \infty}$ are fixed.
%
Moreover, since $\gamma$ is fixed, when $h$ approaches  zero  the straight segment $e'$
whose endpoints are vertexes of $e$ approaches the curved edge $e$.
Therefore by Assumption  \ref{ass:mesh}, for sufficiently small $h$, the length $h_e$ of the curved edge $e$ is comparable with the diameter $h_E$.
\end{remark}

In the following the symbol $\lesssim$  will denote a bound up to a generic positive
constant, independent of the mesh size $h$, but which may depend on $\Omega$, on the
``polynomial'' order $k$, on the parametrization $\gamma$ in Assumption
\ref{ass:regu} and on the shape constant $\rho$ in Assumptions \ref{ass:star}
and \ref{ass:mesh}.
}

{
\subsection{Polynomial and mapped polynomial spaces}

Using standard VEM notations, for $n \in \mathbb{N}$, $s \in \R^+$, and for any $E\in \Omega_h$, let us introduce the spaces:
\begin{itemize}
\item $\Pk_n(E)$ the set of polynomials on $E$ of degree $\leq n$  (with $\Pk_{-1}(E)=\{ 0 \}$),
\item $\Pk_n(\Omega_h) \defeq \{q \in L^2(\Omega_h): q_{|E} \in
\Pk_n(E) \, \forall E \in \Omega_h\}$,
\item $H^s(\Omega_h) \defeq \{v \in L^2(\Omega_h): v_{|E} \in  H^s(E)\, \forall E \in \Omega_h\}$ equipped with the broken norm and seminorm
\[
\|v\|^2_{s,\Omega_h} \defeq \sum_{E \in \Omega_h} \|v\|^2_{s,E}\,,
\qquad
|v|^2_{s,\Omega_h} \defeq \sum_{E \in \Omega_h} |v|^2_{s,E} \,,
\]
\end{itemize}
and we define
\[
\pi_n\defeq \dim(\Pk_n(E)) = \frac{(n+1)(n+2)}{2} \,.
\]
Notice that the following useful polynomial decomposition holds \cite{BeiraodaVeiga2014b,Dassi2019}
\begin{equation}
\label{eq:polydec}
[\Pk_n(E)]^2 = \nabla \Pk_{n+1}(E) \oplus \xxp \Pk_{n-1}(E)
\end{equation}
where $\xxp\defeq (y, -x)^T$.

\begin{remark} \label{rm:rot}
Note that \eqref{eq:polydec} implies that the operator $\rot$ is an isomorphism from $\xxp \Pk_{n-1}(E)$
to the whole $\Pk_{n-1}(E)$, i.e., for any $q_{n-1} \in \Pk_{n-1}(E)$ there exists a unique $p_{n-1} \in \Pk_{n-1}(E)$ such that $q_{n-1} = \rot(\xxp p_{n-1})$.
\end{remark}

A natural basis associated with the space $\Pk_n(E)$ is the set of normalized monomials
\[
\Mk_n(E) \defeq \left\{ \left( \frac{\xx - \xx_E}{h_E} \right)^{\bm{\beta}} \text{ with } \abs{\bm{\beta}} \leq n \right\}
\]
where $\bm{\beta}$ is a multi-index. Notice that $\|m\|_{L^{\infty}(E)} \leq 1$ for any $m \in \Mk_n(E)$.
We extend the basis $\Mk_n(E)$ for vector valued polynomials $[\Pk_n(E)]^2$ defining
\[
\MMk_n(E) \defeq \left\{ (m_r, 0)^\top\,, \,\, (0, m_s)^\top\, \quad \text{with $m_r, m_s \in \Mk_n(E)$}   \right\}.
\]

Let us now introduce the boundary space on the edge $e \in \Ed$.
Following the same approach, for any interval $\Ie \subset \R$ we denote by
$\Pk_n(\Ie)$ the set of polynomials on $\Ie$ of degree $\leq n$ with the
associated basis of normalized polynomials
\[
    \Mk_n(\Ie) \defeq \left\{ 1, \frac{x-x_{\Ie}}{h_{\Ie}},\left(
    \frac{x - x_{\Ie}}{h_{\Ie}}\right)^2, \ldots, \left(
    \frac{x - x_{\Ie}}{h_{\Ie}}\right)^n \right\},
\]
again we notice that $\|m\|_{L^{\infty}(\Ie)} \leq 1$ for any $m \in \Mk_n(\Ie)$.
For each  edge $e \in \mathcal{E}_h$ we consider the following mapped
polynomial and scaled monomial spaces
\begin{eqnarray*}
    \Pkt_n(e) &\defeq& \{ \widetilde{q} = q \circ \gamma^{-1}: q \in
    \Pk_n(\mathfrak{e}) \}\quad\text{and}\quad\\
    \Mkt_n(e) &\defeq& \{ \widetilde{m} = m \circ \gamma^{-1}: m \in
    \Mk_n(\mathfrak{e}) \}\,,
\end{eqnarray*}
i.e., $\Pkt_n(e)$ is made of all functions that are polynomials with respect to the parametrization $\gamma$.
It is important to note that   the following property  holds:
\begin{property}\label{prop:subset}
For any edge $e \in \EdE$ we have
$\Pk_n(E)|_e \subset \Pkt_n(e)$ if $e$ is straight, or
$\Pk_0(E)|_e \subset \Pkt_n(e)$ and
$\Pk_i(E)|_e \not\subset \Pkt_n(e)$, for $i > 0$, if $e$ is curved.
The same considerations apply to $\Mkt_n$.
\end{property}

Finally the local $L^2$-projection operator $\Pi^n_0 \colon [L^2(E)]^2 \to
[\Pk_n(E)]^2$ is defined as follows: given $\ww \in [L^2(E)]^2$ we have
\begin{equation}
\label{eq:projection_def}
    \int_E \Pi^n_0 \ww \cdot \bm{m} \, {\rm d}E =
    \int_E \ww \cdot \bm{m} {\rm d}E \quad\forall
    \bm{m} \in \MMk_n(E).
\end{equation}
With a slight abuse of notation, we denote by $\Pi^n_0 \colon [L^2(\Omega)]^2
\to [\Pk_n(\Omega_h)]^2$ the projection onto the space of piecewise polynomials
defined element-wise by $(\Pi^n_0 \ww)|_E \defeq \Pi^n_0 (\ww|_E)$ for all $E\in
\Omega_h$.
Similarly the $L^2$-edge projection operator $\widetilde{\Pi}^n_0 \colon L^2(e)
\to \Pkt_n(e)$ is defined as follows: given $w \in L^2(e)$
\begin{equation}
\label{eq:projection_edge}
    \int_e \widetilde{\Pi}^n_0 w \, \widetilde{m} \, {\rm d}e =
    \int_e  w \, \widetilde{m} \, {\rm d}e \quad\forall
    \widetilde{m} \in \Mkt_n(e).
\end{equation}
}

{
\subsection{Vector space}\label{subsec:vector_spaces}

Let $k \geq 0$ be the polynomial degree of accuracy of the method.
We proceed as in a standard virtual element fashion, i.e., 
we firstly define the virtual spaces element-wise then 
we globally glue them.
We introduce the local virtual space on the curved element $E\in \Omega_h$:
\begin{eqnarray}
\VV_k(E) \defeq \{\vv \in  H(\diver, E) \cap H(\rot, E)&:&
\vv \cdot\nn^e \in \Pkt_k(e) \, \forall e \in \EdE,\nonumber\\
\phantom{\VV_k(E) \defeq \{\vv \in  H(\diver, E) \cap H(\rot, E)}&\phantom{:}&
\diver \vv \in \Pk_k(E),\, \rot \vv \in \Pk_{k-1}(E) \}\,.\nonumber\\
\label{eqn:spaceCurved}
\end{eqnarray}
The definition above extends to the curved elements the ``straight'' mixed VEM space introduced in
\cite{BeiraodaVeiga2014b,BeiraoVeiga2016} that is the VEM counterpart of the Raviart-Thomas spaces to more general element geometries.
An element $\vv$ belonging to the space $\VV_k(E)$ is well defined (assuming the compatibility condition of the divergence Theorem), but it is not
a-priori specified in the internal part of $E$ as done in the standard finite
elements.

We have the following choice for the degrees of freedom.
\begin{dof}[DoFs for $\VV_k(E)$]\label{dof:vhk}
    The set of scaled degrees of freedom associated to the space $\VV_k(E)$ are
    given for all $\ww \in \VV_k(E)$, by the linear operators $\boldsymbol{D}$ split into three subsets:
    \begin{itemize}
    \item $\boldsymbol{D_1}$: the boundary moments
    \[
    \boldsymbol{D_1}^{e,i}(\ww) \defeq \frac{1}{h_e} \int_e \ww \cdot \nn^e
    \widetilde{m}_i \, {\rm d}e
    \quad \forall e \in \EdE, \, \forall \widetilde{m}_i \in \Mkt_k(e), i=1, \dots, k+1;
    \]
    \item $\boldsymbol{D_2}$: the element moments of the divergence
    \[
    \boldsymbol{D_2}^{j}(\ww) \defeq
    \frac{h_E}{|E|} \int_E \diver \ww \, m_j \, {\rm d}E
    \quad \forall m_j \in \Mk_k(E)\setminus \Mk_0(E),\, j=2, \dots, \pi_k;
    \]
    \item $\boldsymbol{D_3}$: the element moments
    \[
    \boldsymbol{D_3}^l(\ww) \defeq
    \frac{1}{|E|} \int_E \ww \cdot \mmp m_l \, {\rm d}E
    \quad \forall m_l \in \Mk_{k-1}(E),\, l=1, \dots, \pi_{k-1},
    \]
    where $\mmp:= \left(\mathlarger{\frac{(y - y_E)}{h_E}},\,-\mathlarger{\frac{(x - x_E)}{h_E}}\right)$.
    \end{itemize}
\end{dof}
\noindent
The dimension of $\VV_k(E)$ is given by
\begin{equation}
\label{eq:diml}
\dim(\VV_k(E)) = \ell_e (k+1) + (\pi_{k} - 1) + \pi_{k-1} \,.
\end{equation}

\begin{remark}
\label{rm:dofs1}
The proof that the linear operators  $\boldsymbol{D_1}$,  $\boldsymbol{D_2}$ and
$\boldsymbol{D_3}$ constitute a set of DoFs for $\VV_k(E)$ follows the same
guidelines of Lemma 3.1, Lemma 3.2 and Theorem 3.1 in \cite{BeiraodaVeiga2014b}.
\end{remark}

\begin{remark}
\label{rm:dofs2}
The set of DoFs $\boldsymbol{D_2}$ used in the present work is different from the one suggested in \cite{BeiraodaVeiga2014b}, where, instead, the moments
\[
    \frac{h_E}{|E|} \int_E \ww \cdot \nabla m_{k} \, {\rm d}E
    \quad \forall m_{k} \in \Mk_{k}(E)\setminus \Mk_0(E).
\]
are used. The choice proposed in the present work turns out to be particularly suited for curved elements as explained in Remark \ref{rm:dofs3}.
\end{remark}

The global space is defined by gluing together all local spaces, which is thus set as
\begin{equation}
\label{eq:VVG}
    \VV_k(\Omega_h) \defeq \{ \vv \in \VV: \, \vv|_E \in
    \VV_k(E)\, \forall E \in \Omega_h \}.
\end{equation}
More specifically, we require that for any internal edge $e \in \EdE \cap \mathcal{E}_h^{E'}$
\[
\vv|_E \cdot \nn_e^E + \vv|_{E^\prime} \cdot \nn_e^{E^\prime} = 0
\quad \forall \vv \in \VV_k(\Omega_h),
\]
that is in accordance with the DoFs definition $\boldsymbol{D_1}$.
The dimension of $\VV_k(\Omega_h)$ is thus given by
\begin{equation}
\label{eq:dimg}
\dim(\VV_k(\Omega_h)) = L_e (k+1) + (\pi_{k} - 1)L_E + \pi_{k-1} L_E \,,
\end{equation}
where $L_e$ and $L_E$ are the number of edges and polygons in $\Omega_h$, respectively.

\subsection{Scalar space}\label{subsec:scalar_spaces}

The approximation of the continuous space $Q$ is made of piecewise discontinuous
polynomials in each element. The space $Q_k(\Omega_h) \subset Q$ belongs to the
standard finite elements and its elements can be easily handled. Namely for $k
\geq 0$, we have
\begin{gather*}
    Q_k(E)\defeq \{ v \in L^2(E):\, v \in \mathbb{P}_k(E) \}.
\end{gather*}
For this space we consider the following DoFs
\begin{dof}[DoFs for $Q_k(E)$]\label{dof:qhk}
    The internal scaled moments are the DoFs for $Q_k(E)$, i.e., for any $v \in Q_k(E)$ we consider
    \begin{itemize}
    \item $\boldsymbol{D_Q}$: the element moments
        \[
        \boldsymbol{D_Q}^r(v)\defeq
           \frac{1}{|E|} \int_E v \,m_r \, {\rm d}E \qquad  \forall m_r \in
           \Mk_k(E), \, r=1, \dots, \pi_k.
        \]
    \end{itemize}
\end{dof}
\noindent
We define the global discrete space as
\begin{equation}
\label{eq:qh}
    Q_k(\Omega_h)\defeq \{v \in Q:\, v|_E \in Q_k(E)\}.
\end{equation}
Notice that by construction
we have
$\diver (\VV_k(\Omega_h)) = Q_k(\Omega_h)$.
}

{
\subsection{Polynomial projector and discrete forms}\label{sec:discrete_forms}

As for the straight virtual spaces, a function $\ww \in \VV_k(E)$ is not known in closed form, however exploiting the DoFs values of $\ww$ we can compute some fundamental informations.
\paragraph{The polynomial $\ww \cdot \nn^e$ is computable}
We start by noticing that the normal component $\ww \cdot \nn^e$ is explicitly known for all $e \in \EdE$.
Indeed, being $\ww \cdot\nn^e \in \Pkt_k(e)$, there exist $c_1, \dots, c_{k+1} \in \R$ such that
\begin{equation}
\label{eq:wwb}
    \ww \cdot \nn^e \!=\!
    \sum_{\rho = 1}^{k+1} c_\rho \widetilde{m}_\rho  \!=\!
    \sum_{\rho = 1}^{k+1} c_\rho {m}_\rho \circ \gamma^{-1}
    \quad \text{with }\widetilde{m}_\rho \in \Mkt_k(e)\text{ and }{m}_\rho \in \Mk_k(\Ie).
\end{equation}
In order to compute the coefficients $c_\rho$ we exploit the DoFs $\boldsymbol{D_1}$:
\begin{gather*}
\boldsymbol{D_1}^{e,i}(\ww) =
\frac{1}{h_e}\int_e \ww \cdot \nn^e \, \widetilde{m}_i \, {\rm d}e  =
\sum_{\rho = 1}^{k+1}\frac{c_\rho}{h_e} \int_e \widetilde{m}_\rho \,
\widetilde{m}_i \, {\rm d}e =
\sum_{\rho = 1}^{k+1} \frac{c_\rho}{h_e} \int_{\Ie} m_\rho \, m_i \, \|\gamma^\prime\| {\rm d}t
\end{gather*}
for $i=1, \dots, k+1$. Then it is possible to compute the coefficients $c_\rho$ and thus the explicit expression of $\ww \cdot \nn^e$ for any edge $e \in \EdE$.

\paragraph{The polynomial $\diver \ww$ is computable}
In such framework we can explicitly compute $\diver \ww$ via $\boldsymbol{D_1}$ and $\boldsymbol{D_2}$.
Indeed, being $\diver \ww \in \Pk_k(E)$, there exist $d_1, \dots, d_{\pi_k} \in \R$ such that
\begin{equation}
\label{eq:wwd}
    \diver \ww =
    \sum_{\theta = 1}^{\pi_k} d_\theta m_\theta
    \quad \text{with ${m}_\theta \in \Mk_k(E)$,}
\end{equation}
then it follows that
\[
\frac{h_E}{|E|}\int_E \diver \ww \, m_j \, {\rm d}E= \frac{h_E}{|E|}
\sum_{\theta = 1}^{\pi_k} d_\theta \int_E m_\theta m_j \, {\rm d}E
\quad \forall m_j \in \Mk_k(E),\, j=1, \dots, \pi_k.
\]
As before the right-hand side matrix is computable, whereas the left-hand side
corresponds to the DoFs $\boldsymbol{D_2}^j(\ww)$ if $m_j \in \Mk_k(E) \setminus
\Mk_0(E)$, for $j=1$ we exploit the boundary information:
\[
\frac{h_E}{|E|}\int_E \diver \ww  \, {\rm d}E=
\frac{h_E}{|E|}\int_{\partial E} \ww \cdot \nn_E \, {\rm d}e
= \sum_{e \in \EdE} \sigma_{E,e} \frac{h_E h_e}{|E|} \frac{1}{h_e} \int_e \ww
\cdot \nn^e  \, {\rm d}e
\]
that, recalling Property \ref{prop:subset}, is a linear combination of DoFs $\boldsymbol{D_1}^{e,1}(\ww)$.

\paragraph{The projection $\Pi^k_0$ is computable}
The computations above allow us to evaluate the projection  $\Pi^k_0 \ww$ for all $\ww \in \VV_k(E)$.
We consider first the following expansion on vector monomials
\begin{gather*}
    \Pi_0^k \ww = \sum_{\xi = 1}^{2\pi_k} w_\xi \bm{m}_\xi
    \qquad \text{with  $\bm{m}_\xi \in \MMk_k(E)$}
\end{gather*}
and then we use definition \eqref{eq:projection_def} to obtain
\begin{gather*}
    \int_E \ww \cdot \bm{m}_s \, {\rm d}E =
    \int_E \Pi^k_0 \ww \cdot \bm{m}_s \, {\rm d}E =
    \sum_{\xi = 1}^{2\pi_k} w_\xi \int_E \bm{m}_\xi \cdot \bm{m}_s \, {\rm d}E
\end{gather*}
for all $\bm{m}_s \in \MMk_k(E)$, $s=1, \dots, 2\pi_k$.
Unfortunately, the first term involves a virtual function $\bm{w}$ which makes it
not computable as it is.
To proceed, we can use the decomposition \eqref{eq:polydec} of
$\bm{m}_s$ obtaining
\[
\bm{m}_s = \nabla p_{k+1} + \sum_{l=1}^{\pi_{k-1}} g_l \mmp \, m_l
\]
for a suitable polynomial $p_{k+1} \in \Pk_{k+1}(E)  \setminus \Pk_0(E)$
and suitable coefficients $g_1, \dots, g_{\pi_{k-1}} \in \R$.
Therefore integrating by parts, \eqref{eq:wwb} and \eqref{eq:wwd} yield
\[
\begin{aligned}
&\int_E \bm{w} \cdot \bm{m}_s \, {\rm d}E =
\int_E \bm{w} \cdot \nabla p_{k+1} \, {\rm d}E  +
\sum_{l=1}^{\pi_{k-1}} g_l \int_E \bm{w} \cdot \mmp \, m_l \, {\rm d}E
\\
&=
\int_{\partial E} \bm{w} \cdot \nn_E  p_{k+1} \, {\rm d}e  -
\int_E \diver \bm{w} \, p_{k+1} \, {\rm d}E  +
\sum_{l=1}^{\pi_{k-1}} g_l \int_E \bm{w} \cdot \mmp \, m_l \, {\rm d}E
\\
&=
\sum_{e \in \EdE} \sigma_{E,e} \sum_{\rho=1}^{k+1} c_{\rho} \int_{e}
\widetilde{m}_\rho p_{k+1} \, {\rm d}e  -
\sum_{\theta=1}^{\pi_k} d_\theta \int_E m_\theta  p_{k+1} \, {\rm d}E  +
|E|\sum_{l=1}^{\pi_{k-1}} g_l  \boldsymbol{D_3}^l(\ww)
\end{aligned}
\]
that is a computable expression.

Following a standard procedure, we define the computable discrete local form
$a_k^E(\cdot, \cdot) \colon \VVb_k(E) \times \VVb_k(E) \to \R$, with $\VVb_k(E)\defeq \VV_k(E) + [\Pk_k(E)]^2$, given by
\begin{equation}
\label{eq:ahE}
a_k^E(\uu_h, \vv_h) \defeq \int_E \mu \kappa^{-1} \Pi_0^k \uu_h \cdot \Pi_0^k \vv_h \, {\rm d}E +
\nu(E) \St((I - \Pi_0^k)\uu_h, (I - \Pi_0^k)\vv_h)
\end{equation}
for all $\uu_h,\vv_h \in \VVb_k(E)$. In the previous definition
the term $\nu(E) \in
\R$ is a cell-wise approximation of the physical parameters $\mu k^{-1}$ and the
stabilization form $\St(\cdot, \cdot) \colon \VVb_k(E) \times \VVb_k(E)  \to \R$ is defined by
\[
\St(\uu_h, \vv_h) \defeq   \abs{E} \sum_{s = 1}^{N_{\dofop}(E)} \boldsymbol{D}^s(\uu_h) \boldsymbol{D}^s(\vv_h)
\]
that is
\begin{multline}
\label{eq:St}
\St(\uu_h, \vv_h) \defeq
|E| \sum_{e \in \EdE} \sum_{i=1}^{k+1} \boldsymbol{D_1}^{e,i}(\uu_h) \boldsymbol{D_1}^{e,i}(\vv_h) +
\\+
|E|      \sum_{j=2}^{\pi_k} \boldsymbol{D_2}^{j}(\uu_h) \boldsymbol{D_2}^{j}(\vv_h) +
|E|    \sum_{l=1}^{\pi_{k-1}} \boldsymbol{D_3}^{l}(\uu_h) \boldsymbol{D_3}^{l}(\vv_h)
\end{multline}
for all $\uu_h,\vv_h \in \VVb_k(E)$, being $N_{\dofop}(E)$ the total number of DoFs on $E$.
Since the global form is the sum of the local counterparts, we obtain $a_k(\cdot, \cdot) \colon
\VVb_k(\Omega_h)\times \VVb_k(\Omega_h) \to \R$ defined by
\begin{equation}
\label{eq:ah}
        a_k(\uu_h, \vv_h) \defeq \sum_{E \in \Omega_h} a_k^E(\uu_h, \vv_h)
        \quad
        \forall \uu_h, \vv_h \in \VVb_k(\Omega_h)
\end{equation}
\begin{remark}[On the space $\VVb_k$]
    In the definition of the local discrete form $a_k^E$ \eqref{eq:ahE}, we have
    considered the sum space $\VVb_k(E)$ for both of its entries. In fact, as
    reported in Property \ref{prop:subset}, the space $\VV_k(E)$ may not contain
    all the polynomials up to degree $k$. However, in order to have the optimal rate of convergence for
    the proposed scheme, we need to verify the continuity of $a_k^E$ on the sum space $\VVb_k(E)$
    (cfr. Proposition \ref{pr:continuity}).
\end{remark}

\subsection{The discrete problem}
\label{sub:dp}

Referring to the discrete spaces \eqref{eq:VVG} and \eqref{eq:qh}, the discrete form \eqref{eq:ah}, the virtual element approximation
of the Darcy equation is given by
\begin{problem}[VEM problem]\label{pb:darcy_vem}
    Find the couple Darcy velocity and pressure $(\bm{q}_h, p_h) \in \VV_k(\Omega_h) \times Q_k(\Omega_h)$ such that
    \begin{align}
       \left \{
        \begin{aligned}
            & a_k(\bm{q}_h, \bm{v}_h) + b(\bm{v}_h, p_h) = G(\bm{v}_h) &&
            \forall \bm{v}_h \in \VV_k(\Omega_h)
            \\
            & b(\bm{q}_h, v_h) = F(v_h) && \forall v_h \in Q_k(\Omega_h).
        \end{aligned}
         \right.
    \end{align}
\end{problem}
\noindent
Notice that since for any function $\vv_h \in \VV_k(\Omega_h)$ its divergence and its boundary values are explicitly known, we do not need to introduce any approximation for the form $b(\cdot, \cdot)$ and for the linear function $G(\cdot)$.
}

{
\section{Theoretical analysis} \label{sec:theory}

In this section, we introduce an interpolation operator that allows us to show
the inf-sup stability of the proposed scheme. After, the stability of the
stabilization term is studied.

\subsection{Interpolation and Inf-sup stability}
\label{sub:int}

We start by reviewing a classical approximation result for polynomials on star-shaped domains, see for instance \cite{brenner-scott:book}.

\begin{lemma}[Bramble-Hilbert]
\label{lm:bramble}
Under Assumption \ref{ass:star}, let $0 \leq s \leq k+1$.
Then, referring to \eqref{eq:projection_def}, for all $\vv \in \VV \cap H^s(\Omega_h)$ it holds
\[
\|\vv - \Pi^k_0 \vv\|_{\Omega_h,0} \lesssim h^s \, |\vv|_{\Omega_h,s} \,.
\]
\end{lemma}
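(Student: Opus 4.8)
The plan is to reduce the global estimate to an element-by-element polynomial approximation bound and then invoke the classical Bramble-Hilbert theory. Since both the broken norm $\|\cdot\|_{0,\Omega_h}$ and the broken seminorm $|\cdot|_{s,\Omega_h}$ are defined as sums of local contributions, and since $\Pi^k_0$ acts element-wise through its defining relation \eqref{eq:projection_def}, it suffices to establish the local estimate
\[
\|\vv - \Pi^k_0 \vv\|_{0,E} \lesssim h_E^s \, |\vv|_{s,E} \qquad \forall E \in \Omega_h,
\]
with a constant independent of $E$. Squaring, summing over $E \in \Omega_h$, and using $h_E \leq h$ (hence $h_E^{2s} \leq h^{2s}$ since $s \geq 0$) then yields the claim.

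First I would record that, because $\MMk_k(E)$ spans $[\Pk_k(E)]^2$, the defining relation \eqref{eq:projection_def} characterizes $\Pi^k_0 \vv$ as the $L^2(E)$-orthogonal projection of $\vv$ onto $[\Pk_k(E)]^2$. Consequently $\Pi^k_0$ reproduces vector polynomials of degree $\leq k$ and has operator norm one in the $L^2$ norm. Writing $\vv - \Pi^k_0 \vv = (I - \Pi^k_0)(\vv - \pp)$ for an arbitrary $\pp \in [\Pk_k(E)]^2$, these two properties give the best-approximation bound
\[
\|\vv - \Pi^k_0 \vv\|_{0,E} \leq \inf_{\pp \in [\Pk_k(E)]^2} \|\vv - \pp\|_{0,E}.
\]

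Next I would bound the right-hand side by the classical Bramble-Hilbert / Deny-Lions estimate for polynomial approximation on star-shaped domains (see \cite{brenner-scott:book}). Under Assumption \ref{ass:star} the element $E$ is star-shaped with respect to a ball of radius $\geq \rho\, h_E$, so its chunkiness parameter is bounded uniformly in terms of $\rho$; choosing $\pp$ to be the averaged (Sobolev) Taylor polynomial of $\vv$ of degree $k$ on $E$ then yields
\[
\inf_{\pp \in [\Pk_k(E)]^2} \|\vv - \pp\|_{0,E} \lesssim h_E^s \, |\vv|_{s,E},
\]
with a constant depending only on $s$, $k$ and $\rho$. Combining the two displays gives the local estimate, and summing completes the proof.

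The main point to watch is the \emph{uniformity} of the constant across the whole mesh family: this is exactly what Assumption \ref{ass:star} secures, since the Bramble-Hilbert constant enters only through the chunkiness parameter of $E$, which is controlled by $\rho$. I would also emphasize that the possible curvature of the boundary elements plays no role here. The estimate is a purely approximation-theoretic statement about the $L^2$-approximation of an $H^s$ function by polynomials on a star-shaped set, so the same argument applies verbatim to curved and straight elements alike.
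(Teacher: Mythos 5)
Your proof is correct and follows exactly the route the paper intends: the paper does not prove this lemma but cites it as the classical polynomial approximation result on star-shaped domains from \cite{brenner-scott:book}, which is precisely the best-approximation-plus-Deny--Lions argument you reconstruct, with uniformity coming from the chunkiness bound in Assumption \ref{ass:star}. Your closing observation that the curvature of boundary elements is irrelevant here is also accurate, since the estimate only uses that each $E$ is star-shaped.
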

Let us introduce the linear Fortin operator $\Pi^k_{\rm F} \colon [H^1(\Omega)]^2 \to \VV_k(\Omega_h)$
defined through the DoFs $\boldsymbol{D_1}$,  $\boldsymbol{D_2}$ and
$\boldsymbol{D_3}$. For $\ww \in [H^1(\Omega)]^2$ and for all $e \in \Ed$ and $E
\in \Omega_h$,  we require the following three conditions
\begin{eqnarray}
\label{eq:fortin1}
\hspace{-1em}\int_e (\ww - \Pi^k_{\rm F} \ww) \cdot \nn^e \widetilde{m}_i \, {\rm
d}e = 0
&\forall&\hspace{-1em}\widetilde{m}_i \in \Mkt_k(e),\, i=1, \dots, k+1;
\\
\label{eq:fortin2}
\hspace{-1em}\int_E \diver (\ww - \Pi^k_{\rm F} \ww) \, m_j \, {\rm d}E = 0
&\forall&\hspace{-1em} m_j \in \Mk_k(E)\setminus \Mk_0(E),\, j=2, \dots, \pi_k;
\\
\label{eq:fortin3}
\hspace{-1em}\int_E (\ww -\Pi^k_{\rm F} \ww) \cdot \mmp m_l \, {\rm d}E = 0
&\forall&\hspace{-1em} m_l \in \Mk_{k-1}(E),\, l=1, \dots, \pi_{k-1}.
\end{eqnarray}
The definition above easily implies that the following diagram
\begin{equation}
\label{eq:diagram}
\begin{split}
[H^1(\Omega)]^2 \,
&\xrightarrow[]{  \,\,\,\,\, \text{{$\diver$}} \,\,\,\,\,  } \quad \, \, \,
Q \quad \, \, \,
\xrightarrow[]{\, \, \,\,\,\,\, 0 \,\,\,\,\, \, \,}\,
0
\\
\Pi^k_{\rm F}  \bigg\downarrow \qquad &
\quad \quad \qquad
\Pi_0^k        \bigg\downarrow
\\
\VV_k(\Omega_h) \,
&\xrightarrow[]{  \,\,\,\,\, \text{{$\diver$}} \,\,\,\,\,  }\,
Q_k(\Omega_h) \, \,
\xrightarrow[]{\, \, \,\,\,\,\, 0 \,\,\,\,\, \, \,}\,
0
\end{split}
\end{equation}
where $0$  is the mapping that to every function associates the number $0$, is a commutative map. In particular, we have the following property:
\begin{equation}
\label{eq:divpf}
\diver (\Pi^k_{\rm F} \ww) = \Pi_0^k \diver \ww \quad \forall \ww \in [H^1(\Omega)]^2.
\end{equation}
Indeed, since $\diver (\Pi^k_{\rm F} \ww) \in Q_k(\Omega_h)$, by definition of $\Pi_0^k$, we need to verify that for all $E \in \Omega_h$
\[
\int_E \diver (\ww - \Pi^k_{\rm F} \ww)\,m_j \, {\rm d}E= 0
\quad \forall m_j \in \Mk_k(E),\, j=1, \dots, \pi_k.
\]
If $m_j \in \Mk_k(E) \setminus \Mk_0(E)$ it follows by  \eqref{eq:fortin2},
whereas if $j=1$ by Property \ref{prop:subset}  and \eqref{eq:fortin1} we have
\[
\begin{aligned}
\int_E \diver (\ww - \Pi^k_{\rm F} \ww)\, {\rm d}E &=
\int_{\partial E} (\ww - \Pi^k_{\rm F} \ww) \cdot \nn_E \, {\rm d}e \\
&=\sum_{e \in \EdE} \sigma_{E,e} \int_e (\ww - \Pi^k_{\rm F} \ww) \cdot \nn^e \,
{\rm d}e = 0.
\end{aligned}
\]

\begin{remark}
\label{rm:dofs3}
Notice that property \eqref{eq:divpf} is strictly related to the DoFs $\boldsymbol{D_2}$ and the associated Fortin operator.
With the choice of DoFs of Remark \ref{rm:dofs2} and adopted for the
``straight'' MVEM \cite{BeiraodaVeiga2014b} with the associated Fortin operator we have instead
\begin{multline*}
\int_E \diver (\ww - \Pi^k_{\rm F} \ww)\,m_k \, {\rm d}E =
- \int_E (\ww - \Pi^k_{\rm F} \ww) \cdot \nabla m_k \, {\rm d}E + \\
+ \sum_{e \in \Ed} \sigma_{E, e} \int_e  (\ww - \Pi^k_{\rm F} \ww) \cdot \nn^e
m_k\, {\rm d}e \,.
\end{multline*}
For a curved polygon $E$, the second term is not zero any more since, as observed in
Property \ref{prop:subset}, the restriction of $m_k$ on a curved edge $e$ does not belong to $\Pkt_k(e)$.
Therefore the choice of $\boldsymbol{D_2}$ is particularly suited for curved polygons.
\end{remark}

\noindent
As a consequence of the above arguments we have the following results:
the first one deals with the approximation property of the space \eqref{eq:VVG} and follows
combining \eqref{eq:divpf} and Lemma \ref{lm:bramble} with \cite{duran},
the second one is associated with the commutativity of the diagram \eqref{eq:diagram} and deals with the inf-sup stability of the method \cite{Boffi2013}.
\begin{prop}
\label{pr:interpolation}
Let $\ww \in \VV \cap [H^{k+1}(\Omega_h)]^2$ with $\diver \ww \in H^{k+1}(\Omega_h)$ and let $\Pi^k_{\rm F}$ be the linear Fortin operator.
Then under Assumption \ref{ass:star} it holds
\[
\begin{gathered}
\|\ww - \Pi^k_{\rm F} \ww\|_{0, \Omega} \lesssim h^{k+1} \, |\ww|_{k+1, \Omega_h}\,,
\\
\|\diver \ww - \diver \Pi^k_{\rm F} \ww\|_{0, \Omega} \lesssim h^{k+1} \, |\diver \ww|_{k+1, \Omega_h}\,.
\end{gathered}
\]
\end{prop}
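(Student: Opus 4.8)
The plan is to split the two estimates, dispatching the divergence bound immediately from the commuting diagram and then concentrating the real effort on the velocity bound, which I would prove element by element and sum at the end. For the divergence estimate, the commuting property \eqref{eq:divpf} gives, on each $E$, the pointwise identity $\diver(\ww-\Pi^k_{\rm F}\ww)=\diver\ww-\Pi_0^k\diver\ww$. Since $\diver\ww\in H^{k+1}(\Omega_h)$ is scalar and $\Pi_0^k$ acts on it as the elementwise $L^2$-projection onto $\Pk_k$, the scalar version of Lemma \ref{lm:bramble} with $s=k+1$ yields $\|\diver\ww-\Pi_0^k\diver\ww\|_{0,\Omega}\lesssim h^{k+1}|\diver\ww|_{k+1,\Omega_h}$, which is exactly the claim; this is the short half of the argument.

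For the velocity bound I would fix $E\in\Omega_h$, choose a polynomial approximant $\ww_\pi\in[\Pk_k(E)]^2$ (an averaged Taylor polynomial of $\ww$) for which the standard estimates give $\|\ww-\ww_\pi\|_{0,E}\lesssim h_E^{k+1}|\ww|_{k+1,E}$ and $|\ww-\ww_\pi|_{1,E}\lesssim h_E^{k}|\ww|_{k+1,E}$, and then split, using linearity of $\Pi^k_{\rm F}$,
\[
\ww-\Pi^k_{\rm F}\ww=(\ww-\ww_\pi)-\Pi^k_{\rm F}(\ww-\ww_\pi)+(\ww_\pi-\Pi^k_{\rm F}\ww_\pi).
\]
The first summand is controlled by Lemma \ref{lm:bramble}. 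For the second I would first establish an $L^2$-stability bound for the Fortin operator: since $\Pi^k_{\rm F}\vv\in\VV_k(E)$ carries exactly the degrees of freedom of $\vv$, a scaled norm-equivalence on the finite-dimensional space $\VV_k(E)$ (the operators $\boldsymbol{D}$ of Degrees of freedom \ref{dof:vhk} form a norm, and their scalings make the equivalence uniform under Assumptions \ref{ass:star}--\ref{ass:mesh}), combined with an edge trace inequality for $\boldsymbol{D_1}$ and Cauchy--Schwarz for $\boldsymbol{D_2},\boldsymbol{D_3}$, gives $\|\Pi^k_{\rm F}\vv\|_{0,E}\lesssim\|\vv\|_{0,E}+h_E|\vv|_{1,E}$. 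Applying this to $\vv=\ww-\ww_\pi$ and inserting the two approximation bounds collapses the second summand to $h_E^{k+1}|\ww|_{k+1,E}$ as well.

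The decisive term, and the one I expect to be the main obstacle, is the polynomial-consistency error $\ww_\pi-\Pi^k_{\rm F}\ww_\pi$. For straight edges Property \ref{prop:subset} gives $\ww_\pi\in\VV_k(E)$, so its degrees of freedom are reproduced exactly and this term vanishes, recovering the classical Raviart--Thomas situation; the estimate then reduces to the first two summands. For curved edges it does \emph{not} vanish: by Property \ref{prop:subset} the normal trace $\ww_\pi\cdot\nn^e$ need not lie in $\Pkt_k(e)$, so $\boldsymbol{D_1}$ only reproduces its mapped projection and $(\ww_\pi-\Pi^k_{\rm F}\ww_\pi)\cdot\nn^e=(I-\widetilde{\Pi}^k_0)(\ww_\pi\cdot\nn^e)$. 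The heart of the matter is to show this geometric mismatch is still of order $h^{k+1}$. Here I would exploit that the parametrization $\gamma$ is fixed and of class $C^{m+1}$ (Assumption \ref{ass:regu}): the non-polynomiality of $(\ww_\pi\cdot\nn^e)\circ\gamma$ stems entirely from the fixed smooth map (through both $\ww_\pi\circ\gamma$ and $\nn^e\circ\gamma$), whose derivatives are bounded by curve-dependent constants, so a Bramble--Hilbert estimate on the parameter interval bounds $(I-\widetilde{\Pi}^k_0)(\ww_\pi\cdot\nn^e)$ by $h_e^{k+1}$ times such constants, while Remark \ref{rm:length} lets me trade $h_e$ for $h_E$. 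The subtle point is the correct tracking of the powers of $h_E$ hidden in $\|\ww_\pi\|$, which is precisely what the interpolation framework of \cite{duran} supplies; feeding this mismatch through the same DoF norm-equivalence used for the second summand then bounds $\|\ww_\pi-\Pi^k_{\rm F}\ww_\pi\|_{0,E}$ by $h_E^{k+1}|\ww|_{k+1,E}$. Summing the three contributions over $E\in\Omega_h$ gives the stated velocity estimate.
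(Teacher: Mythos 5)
Your handling of the divergence bound and of the first two summands in the velocity bound is sound and matches the (very terse) route the paper indicates: the commuting property \eqref{eq:divpf} plus Lemma \ref{lm:bramble} for the divergence, and the $L^2$-stability $\|\Pi^k_{\rm F}\vv\|_{0,E}\lesssim\|\vv\|_{0,E}+h_E|\vv|_{1,E}$ obtained from Proposition \ref{pr:coercivity} together with scaled trace inequalities. The gap is in the consistency term $\ww_\pi-\Pi^k_{\rm F}\ww_\pi$, and it is not a bookkeeping issue: the mechanism you propose for it cannot work. By the very definition of the Fortin operator, \emph{every} degree of freedom of $\ww_\pi-\Pi^k_{\rm F}\ww_\pi$ vanishes ($\boldsymbol{D_1}$ matches the moments of $\ww_\pi\cdot\nn^e$ against all of $\Mkt_k(e)$, and likewise for $\boldsymbol{D_2},\boldsymbol{D_3}$), while on a curved element the function itself is nonzero. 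Hence no inequality of the form $\|\cdot\|_{0,E}\lesssim(\text{DoFs})$ can hold on the sum space $\VVb_k(E)$; Proposition \ref{pr:coercivity} is stated for $\VV_k(E)$ only, and ``feeding the mismatch through the same DoF norm-equivalence'' would prove that the consistency term is zero, which it is not. To control it you must instead go through the decomposition \eqref{eq:w1w2}: writing $\bm{\delta}=\ww_\pi-\Pi^k_{\rm F}\ww_\pi$ one has $\diver\bm{\delta}=0$, $\int_E\bm{\delta}\cdot\mmp\, m_l\,{\rm d}E=0$ and $\bm{\delta}\cdot\nn^e=(I-\widetilde{\Pi}^k_0)(\ww_\pi\cdot\nn^e)$, which reduces everything to the edge residual via an estimate of the type $\|\bm{\delta}\|_{0,E}\lesssim h_E^{1/2}\|(I-\widetilde{\Pi}^k_0)(\ww_\pi\cdot\nn_E)\|_{0,\partial E}$.

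Even after this repair, the edge residual does not have the size you claim. On a curved edge the unit normal varies, so $\ww_\pi\cdot\nn^e\notin\Pkt_k(e)$ even when $\ww_\pi$ is a \emph{constant} vector; the $(k{+}1)$-st parameter derivative of $(\ww_\pi\cdot\nn^e)\circ\gamma$ contains the term $\ww_\pi\cdot\frac{d^{k+1}}{dt^{k+1}}\bigl(\nn^e\circ\gamma\bigr)$, whose size is $\|\ww_\pi\|_{L^{\infty}(E)}$ times a fixed curvature constant and which does not vanish with $|\ww|_{k+1,E}$. Tracking the powers of $h$ then yields $\|\ww_\pi-\Pi^k_{\rm F}\ww_\pi\|_{0,E}\lesssim h_E^{k+1}\|\ww_\pi\|_{0,E}$ at best, i.e.\ the full norm $\|\ww\|_{k+1,\Omega_h}$ on the right-hand side rather than the seminorm. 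Indeed the seminorm bound cannot be reached by any argument on a mesh with curved edges: a nonzero constant field has $|\ww|_{k+1,\Omega_h}=0$ but is not reproduced by $\Pi^k_{\rm F}$ on a curved element. You should therefore either prove (and state) the estimate with $\|\ww\|_{k+1,\Omega_h}$, which still suffices for Proposition \ref{pr:final}, or confine the vanishing of the consistency term to straight elements, where Property \ref{prop:subset} gives $\ww_\pi\in\VV_k(E)$ and the classical argument of \cite{duran} applies verbatim.
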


\begin{prop}
\label{pr:infsup}
Under Assumption \ref{ass:star} there exists $\beta >0$ such that
\[
\inf_{v \in Q_k(\Omega_h)} \sup_{\ww \in \VV_k(\Omega_h)}
\frac{b(\ww, v)}{\|v\|_{Q} \|\ww\|_{\VV}} \geq \beta \,.
\]
\end{prop}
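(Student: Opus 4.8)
The plan is to deduce the discrete inf--sup bound from the continuous one through the classical Fortin argument (see \cite{Boffi2013}), the two ingredients being the commuting relation \eqref{eq:divpf} for $\Pi^k_{\rm F}$ and its uniform boundedness. First I would fix $v_h \in Q_k(\Omega_h) \subset Q$ and produce a \emph{regular} velocity field with prescribed divergence. Since Problem~\ref{pb:darcy_weak} is well posed, the continuous inf--sup condition holds, which is equivalent to the surjectivity of $\diver \colon \VV \to Q$; moreover this surjectivity can be realized with $H^1$ regularity, i.e.\ $\diver$ admits a right inverse that is continuous from $L^2(\Omega)$ into $[H^1(\Omega)]^2 \cap \VV$ on the bounded Lipschitz domain $\Omega$. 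Concretely, solving the auxiliary mixed problem $-\Delta \phi = v_h$ in $\Omega$ with $\phi = 0$ on $\partial_n\Omega$ and $\nabla\phi\cdot\nn = 0$ on $\partial_e\Omega$ and setting $\ww \defeq \nabla\phi$, one obtains $\ww \in \VV \cap [H^1(\Omega)]^2$ with $\diver\ww = -v_h$ and $\|\ww\|_{1,\Omega} \lesssim \|v_h\|_{0,\Omega}$, the last bound being the only place where a (standard) regularity estimate for the continuous problem enters.

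Next I would push this field through the Fortin operator. Setting $\ww_h \defeq \Pi^k_{\rm F}\ww \in \VV_k(\Omega_h)$ (well defined because $\ww \in [H^1(\Omega)]^2$), the commuting property \eqref{eq:divpf} together with $\Pi_0^k v_h = v_h$ (recall that $v_h$ is piecewise polynomial of degree $k$) gives $\diver\ww_h = \Pi_0^k \diver\ww = -v_h$. Consequently $b(\ww_h, v_h) = -(\diver\ww_h, v_h)_\Omega = \|v_h\|_{0,\Omega}^2 = \|v_h\|_Q^2$, and the divergence part of the norm satisfies $\|\diver\ww_h\|_{0,\Omega} = \|v_h\|_Q$ exactly, with no loss.

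It then remains to bound the $L^2$ part $\|\ww_h\|_{0,\Omega} = \|\Pi^k_{\rm F}\ww\|_{0,\Omega}$ by $\|\ww\|_{1,\Omega}$, so that $\|\ww_h\|_{\VV} \lesssim \|v_h\|_Q$; inserting the admissible test function $\ww_h$ then yields $\frac{b(\ww_h, v_h)}{\|v_h\|_Q\|\ww_h\|_{\VV}} = \frac{\|v_h\|_Q}{\|\ww_h\|_{\VV}} \gtrsim 1$ uniformly in $v_h$ and $h$, and taking first the supremum over $\ww' \in \VV_k(\Omega_h)$ and then the infimum over $v_h$ produces the sought $\beta > 0$.

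The main obstacle is precisely this uniform $H^1$-stability of the Fortin operator on curved elements, $\|\Pi^k_{\rm F}\ww\|_{0,\Omega} \lesssim \|\ww\|_{1,\Omega}$, which I would establish element by element: on each $E$ one has $\|\Pi^k_{\rm F}\ww\|_{0,E}^2 \lesssim |E|\sum_s |\boldsymbol{D}^s(\ww)|^2$ from a norm equivalence between the $L^2$ norm and the scaled DoF vector on $\VV_k(E)$, after which each degree of freedom in $\boldsymbol{D_1}, \boldsymbol{D_2}, \boldsymbol{D_3}$ is controlled by $\|\ww\|_{0,E} + h_E|\ww|_{1,E}$ using Cauchy--Schwarz, a scaled trace inequality on the edges, and $\diver\ww \in L^2$; summing over $E$ gives the claim. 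The delicate points, in contrast with the straight case, are the scaling of the edge moments in $\boldsymbol{D_1}$ and of the trace inequality on the \emph{curved} edges, which are handled through Assumptions~\ref{ass:star} and~\ref{ass:mesh} and the comparability $h_e \sim h_E$ recorded in Remark~\ref{rm:length}. Once this boundedness is in hand, the conclusion is an application of the abstract Fortin criterion.
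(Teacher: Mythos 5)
Your argument is correct and takes essentially the same route as the paper: the paper proves this proposition only by pointing to the commutativity of the diagram \eqref{eq:diagram} and citing \cite{Boffi2013}, which is precisely the Fortin criterion you carry out in detail. The ingredients you supply --- the $H^1$-regular right inverse of the divergence, the commuting property \eqref{eq:divpf}, and the uniform $L^2$-stability of $\Pi^k_{\rm F}$ via the equivalence between $\|\cdot\|_{0,E}$ and the scaled DoF vector (the content of Propositions \ref{pr:continuity} and \ref{pr:coercivity}) --- are exactly what that citation presupposes.
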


\subsection{Stability analysis}
\label{sub:stab}

The aim of the section is to prove the stability bounds for the approximated
bilinear form \eqref{eq:ahE} and in particular for the stabilization term $\St$.
We want to prove that
\begin{gather*}
\St(\ww, \ww) \gtrsim \|\ww\|^2_{0,E} \quad \forall \ww \in \VV_k(E),
\\
\St(\ww, \ww) \lesssim \|\ww\|^2_{0,E} \quad \forall \ww \in
\VVb_k(E).
\end{gather*}
We start with the following useful inverse estimates.
\begin{lemma}
\label{lm:inverse}
We assume (\ref{ass:star}) and we fix an integer $n \in \mathbb{N}$.
Let $\ww \in H(\diver, E)$ such that $\diver \ww \in \Pk_n(E)$ then
\begin{equation}
\label{eq:divinv}
\|\diver \ww\|_{0,E} \lesssim h_E^{-1} \, \|\ww\|_{0,E} \,.
\end{equation}
Let $\ww \in H(\rot, E)$ such that $\rot \ww \in \Pk_n(E)$ then
\begin{equation}
\label{eq:rotinv}
\|\rot \ww\|_{0,E} \lesssim h_E^{-1} \, \|\ww\|_{0,E} \,.
\end{equation}
\end{lemma}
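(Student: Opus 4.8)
The plan is to prove both estimates by a dilation that removes the $h_E$-scaling and reduces each to a single $h$-free inverse inequality on elements of unit diameter. First I would set $\hat\xx = (\xx - \xx_E)/h_E$ and $\hat\ww(\hat\xx) \defeq \ww(\xx)$, so that on the rescaled element $\hat E \defeq (E - \xx_E)/h_E$ one has $\diver_{\hat\xx}\hat\ww = h_E\,(\diver_\xx \ww)$ and $\|\ww\|_{0,E}^2 = h_E^2\|\hat\ww\|_{0,\hat E}^2$. A direct substitution then shows that \eqref{eq:divinv} is equivalent to the scale-free bound $\|\diver \hat\ww\|_{0,\hat E} \lesssim \|\hat\ww\|_{0,\hat E}$ on $\hat E$, which by Assumption \ref{ass:star} has unit diameter and is star-shaped with respect to a ball of radius $\gtrsim \rho$; after a translation I may assume this ball $B$ is centered at the origin, whence $B \subset \hat E \subset B(0,2)$. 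Since $\rot$ is also first order, the very same dilation reduces \eqref{eq:rotinv} to $\|\rot \hat\ww\|_{0,\hat E} \lesssim \|\hat\ww\|_{0,\hat E}$.

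The heart of the matter is that a naive integration by parts is circular here: pairing $\diver\ww$ against a scalar polynomial and integrating by parts merely reproduces $\|\diver\ww\|^2$, and it moreover creates a boundary term that cannot be controlled, because an $H(\diver,E)$ field has a normal trace only in $H^{-1/2}$. The genuine content of the estimate is that the range of $\diver$ is confined to the finite-dimensional space $\Pk_n$, and I would exploit this by a compactness argument. Suppose the scale-free bound fails uniformly over the admissible family of rescaled elements: there exist $\hat E_j$ and $\hat\ww_j \in H(\diver, \hat E_j)$ with $\diver \hat\ww_j \in \Pk_n(\hat E_j)$, normalized so that $\|\diver \hat\ww_j\|_{0,\hat E_j} = 1$ while $\|\hat\ww_j\|_{0,\hat E_j} \to 0$. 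For every $\phi \in C_c^\infty(B)$ (legitimate for all $j$, since $B \subset \hat E_j$) integration by parts is free of boundary terms and gives $\int_B \diver\hat\ww_j\,\phi = -\int_B \hat\ww_j\cdot\nabla\phi$, whose right-hand side is bounded by $\|\hat\ww_j\|_{0,\hat E_j}\,\|\nabla\phi\|_{0,B} \to 0$. As $\{\diver\hat\ww_j\}$ is bounded in the finite-dimensional $\Pk_n$, a subsequence converges to some $q$, and passing to the limit yields $\int_B q\,\phi = 0$ for all $\phi \in C_c^\infty(B)$, hence $q \equiv 0$. The $\rot$ case is identical, using the boundary-free identity $\int_B \rot\hat\ww_j\,\phi = -\int_B \hat\ww_j\cdot(\partial_x\phi, -\partial_y\phi)$.

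The step I expect to be delicate is reaching the contradiction with a constant that is genuinely uniform over the whole family, which amounts to a uniform lower bound ensuring $q \neq 0$. For this I would invoke the equivalence of norms on the finite-dimensional space $\Pk_n$ between concentric balls: since $\diver\hat\ww_j \in \Pk_n$ and $B \subset \hat E_j \subset B(0,2)$, one has $\|\diver\hat\ww_j\|_{0,\hat E_j} \leq \|\diver\hat\ww_j\|_{0,B(0,2)} \lesssim \|\diver\hat\ww_j\|_{0,B}$, with a constant depending only on $\rho$. Therefore $\|\diver\hat\ww_j\|_{0,B} \gtrsim 1$ uniformly, so the limit satisfies $\|q\|_{0,B} \gtrsim 1$, contradicting $q \equiv 0$. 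It is worth stressing that this packaging of the uniformity through the fixed inner ball $B$ sidesteps any Hausdorff convergence of the curved domains: the curved arcs of $\hat E_j$ live entirely in $\hat E_j \setminus B$ and never enter the norm-equivalence estimate, so the curvature hypotheses (Assumption \ref{ass:regu}, Remark \ref{rm:length}) are needed only to guarantee, together with Assumption \ref{ass:star}, the uniform sandwich $B \subset \hat E_j \subset B(0,2)$ that underlies the whole argument.
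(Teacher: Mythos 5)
Your argument is correct, but it follows a genuinely different route from the paper. The paper's proof is constructive: it takes an equilateral triangle $T_E$ inscribed in the ball $B_E$ of Assumption \ref{ass:star}, multiplies $\diver\ww$ by the cubic bubble $b_3$ vanishing on $\partial T_E$ (so that the integration by parts $\int_{T_E} b_3\,\diver\ww\,\diver\ww = -\int_{T_E}\nabla(b_3\diver\ww)\cdot\ww$ carries no boundary term), and then concludes with a polynomial inverse estimate $\|\nabla(b_3\diver\ww)\|_{0,T_E}\lesssim h_E^{-1}\|b_3\diver\ww\|_{0,T_E}$ together with the norm equivalence $\|p_n\|_{0,E}\lesssim\|p_n\|_{0,T_E}$. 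Your proof replaces the bubble by a scaling-plus-compactness contradiction on the unit-size element; the two devices play the same role (both confine the analysis to the uniform inner ball so that the ill-behaved $H^{-1/2}$ normal trace never appears), but the paper's version yields a constant that is in principle explicit and avoids any limiting procedure, whereas yours is shorter on computation at the price of a non-constructive constant. Your handling of uniformity over the family of (possibly curved) elements via the fixed sandwich $B\subset\hat E_j\subset B(0,2)$ is the right way to make the compactness argument legitimate, and it correctly isolates that only Assumption \ref{ass:star} matters here --- indeed the sandwich already follows from star-shapedness and the unit diameter, so your closing claim that Assumption \ref{ass:regu} is also needed for it is an overstatement (harmless, and consistent with the lemma hypothesizing only \ref{ass:star}). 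One cosmetic slip: the boundary-free identity for the rotation should read $\int_B\rot\hat\ww_j\,\phi = \int_B\hat\ww_j\cdot\ROT\phi$ with $\ROT\phi=(\partial_y\phi,-\partial_x\phi)^\top$, not $-\int_B\hat\ww_j\cdot(\partial_x\phi,-\partial_y\phi)$; the Cauchy--Schwarz bound you draw from it is unaffected.
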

\begin{proof}
Under Assumption \ref{ass:star}, let $T_E \subset E$ be an equilateral triangle inscribed in the ball $B_E$. Then for any $p_n \in \Pk_n(E)$ it holds $\|p_n\|_{0,E} \lesssim \|p_n\|_{0, T_E}$.
Let $b_3 \in \Pk_3(T_E)$ be the cubic bubble with $\|b_3\|_{L^{\infty}(T_E)}=1$.
Then, applying a polynomial inverse estimate on $T_E$ we get
\[
\begin{split}
\|\diver \ww\|_{0,E}^2 & \lesssim \|\diver \ww\|_{0,T_E}^2\\[1.em]
&\lesssim \int_{T_E} b_3 \diver \ww \, \diver \ww \, {\rm d}E
= - \int_{T_E} \nabla(b_3 \diver \ww)  \ww \, {\rm d}E
\\[1.em]
& \lesssim \|\nabla (b_3 \diver \ww)\|_{0,T_E} \|\ww\|_{0,T_E}
\lesssim h_E^{-1} \|b_3 \diver \ww\|_{0,T_E} \|\ww\|_{0,T_E}
\\[1.em]
& \lesssim h_E^{-1} \|\diver \ww\|_{0,T_E} \|\ww\|_{0,T_E}
\lesssim h_E^{-1} \|\diver \ww\|_{0,E} \|\ww\|_{0,E} \,,
\end{split}
\]
from which follows \eqref{eq:divinv}.
The same argument applies to \eqref{eq:rotinv}.
\end{proof}

\begin{prop}
\label{pr:continuity}
Let $E \in \Omega_h$. Under Assumptions \ref{ass:regu}, \ref{ass:star} and~\ref{ass:mesh} the following holds
\[
\St(\ww, \ww) \lesssim \|\ww\|_{0,E}^2 \quad \forall \ww \in \VVb_k(E).
\]
\end{prop}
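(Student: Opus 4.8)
The plan is to fix an element $E\in\Omega_h$ and estimate separately the three families of squared functionals that make up $\St(\ww,\ww)$ in \eqref{eq:St}, namely those built from $\boldsymbol{D_1}$, $\boldsymbol{D_2}$ and $\boldsymbol{D_3}$. I would use throughout that every $\ww\in\VVb_k(E)$ satisfies $\diver\ww\in\Pk_k(E)$ (by definition when $\ww\in\VV_k(E)$, and since $\diver\ww\in\Pk_{k-1}(E)$ when $\ww\in[\Pk_k(E)]^2$), together with the geometric facts $|E|\simeq h_E^2$ from Assumption \ref{ass:star} and $h_e\simeq h_E$ from Assumption \ref{ass:mesh} and Remark \ref{rm:length}, the latter also giving $\ell_e\lesssim1$. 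As $\ell_e$, $\pi_k$ and $\pi_{k-1}$ depend only on $k$ and $\rho$, it suffices to bound each squared functional. The blocks $\boldsymbol{D_2}$ and $\boldsymbol{D_3}$ are then routine: for $\boldsymbol{D_3}$, since $\|m_l\|_{L^\infty(E)}\le1$ and $\mmp$ is bounded on $E$, Cauchy--Schwarz gives $|E|\,(\boldsymbol{D_3}^l(\ww))^2\lesssim\|\ww\|_{0,E}^2$; for $\boldsymbol{D_2}$, Cauchy--Schwarz and $\|m_j\|_{L^\infty(E)}\le1$ give $|E|\,(\boldsymbol{D_2}^j(\ww))^2\lesssim h_E^2\,\|\diver\ww\|_{0,E}^2$, and the divergence inverse estimate \eqref{eq:divinv} of Lemma \ref{lm:inverse} turns this into $\lesssim\|\ww\|_{0,E}^2$.

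The boundary block $\boldsymbol{D_1}$ is the crux. Using $\|\widetilde m_i\|_{L^\infty(e)}\le1$, Cauchy--Schwarz on $e$ and the scalings above, this block is controlled as soon as I establish the scaled trace inequality
\[
\sum_{e\in\EdE}h_e\,\|\ww\cdot\nn^e\|_{0,e}^2\;\lesssim\;\|\ww\|_{0,E}^2+h_E^2\,\|\diver\ww\|_{0,E}^2,
\]
after which \eqref{eq:divinv} once more absorbs the last term into $\|\ww\|_{0,E}^2$. I would prove this estimate edge by edge.

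Fixing an edge $e$ and setting $g\defeq\ww\cdot\nn^e$, the idea is to test $g$ against itself through an edge bubble. Let $b_e$ be a scalar bubble on $e$, positive in the interior and vanishing at the two endpoints, and let $\Psi_e\in H^1(E)$ have boundary trace $\sigma_{E,e}\,b_e\,\widetilde g$ on $e$ and $0$ on $\partial E\setminus e$, where $\widetilde g$ extends $g$ off $e$. Since $\Psi_e$ vanishes at the vertices of $e$ it is continuous along $\partial E$, hence an admissible trace of an $H^1(E)$ function, and a scaling argument yields $\|\Psi_e\|_{0,E}\lesssim h_E^{1/2}\|g\|_{0,e}$ and $\|\nabla\Psi_e\|_{0,E}\lesssim h_E^{-1/2}\|g\|_{0,e}$. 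The $H(\diver)$ Green's formula then gives
\[
\|g\|_{0,e}^2\;\simeq\;\int_e b_e\,g^2\,{\rm d}e=\int_E\diver\ww\,\Psi_e\,{\rm d}E+\int_E\ww\cdot\nabla\Psi_e\,{\rm d}E,
\]
and Cauchy--Schwarz with the bounds on $\Psi_e$ delivers $\|g\|_{0,e}^2\lesssim h_E^{-1/2}\big(\|\ww\|_{0,E}+h_E\|\diver\ww\|_{0,E}\big)\|g\|_{0,e}$, i.e. the per-edge estimate after cancelling $\|g\|_{0,e}$ and squaring.

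I expect the genuine difficulty to lie in the curved edges. All the bubble and extension estimates, as well as the equivalence $\int_e b_e\,g^2\,{\rm d}e\simeq\|g\|_{0,e}^2$, must be transported to the fixed reference interval $\Ie$ through $\gamma$, so that they hold with constants depending only on $k$, $\rho$ and the parametrization of Assumption \ref{ass:regu}; this is exactly where $h_e\simeq h_E$ of Remark \ref{rm:length} and the uniform bounds on $\|\gamma'\|$ and $\|(\gamma^{-1})'\|$ enter. A subtle point is that, by Property \ref{prop:subset}, the normal trace $g=\ww\cdot\nn^e$ of a polynomial summand $\ww\in[\Pk_k(E)]^2$ is \emph{not} a mapped polynomial on a curved edge, so the bubble norm-equivalence cannot be used as a plain polynomial inequality; I would circumvent this either by working in the fixed finite-dimensional space spanned by such traces on $\Ie$, on which all norms are equivalent uniformly in $h$, or by treating the polynomial part directly with the standard $H^1$ trace inequality and a polynomial inverse estimate, since $[\Pk_k(E)]^2\subset[H^1(E)]^2$. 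Making these two treatments uniform in $h$ is the main technical obstacle.
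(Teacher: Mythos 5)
Your proposal is correct and follows the same overall structure as the paper's proof: the same splitting of $\St(\ww,\ww)$ into the $\boldsymbol{D_1}$, $\boldsymbol{D_2}$, $\boldsymbol{D_3}$ blocks, the same Cauchy--Schwarz treatment of $\boldsymbol{D_2}$ and $\boldsymbol{D_3}$ using $\|m\|_{L^\infty(E)}\le1$ and the divergence inverse estimate \eqref{eq:divinv}, and the same reduction of the boundary block to the scaled trace bound $h_E\,\|\ww\cdot\nn^E\|_{0,\partial E}^2\lesssim\|\ww\|_{0,E}^2+h_E^2\,\|\diver\ww\|_{0,E}^2$. The one genuine difference is how that trace bound is obtained: the paper simply invokes the $H(\diver)$ trace inequality of \cite[Theorem 3.24]{monk:book} and then absorbs the divergence term via Lemma \ref{lm:inverse}, whereas you re-derive it from scratch by an edge-bubble lifting $\Psi_e$ with trace $b_e\,g$ on $e$, Green's formula, and the scalings $\|\Psi_e\|_{0,E}\lesssim h_E^{1/2}\|g\|_{0,e}$, $\|\nabla\Psi_e\|_{0,E}\lesssim h_E^{-1/2}\|g\|_{0,e}$. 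Your route is more self-contained and has the merit of making explicit the point the citation glosses over, namely that the norm equivalence $\int_e b_e g^2\simeq\|g\|_{0,e}^2$ requires $g=\ww\cdot\nn^e$ to live in a fixed finite-dimensional space with constants uniform in $h$ --- which holds for the virtual part since $\ww\cdot\nn^e\in\Pkt_k(e)$ pulls back to $\Pk_k(\Ie)$, while the polynomial part of $\VVb_k(E)$ (whose normal trace is \emph{not} in $\Pkt_k(e)$ on a curved edge, by Property \ref{prop:subset}) can be handled by the standard $H^1$ trace inequality plus a polynomial inverse estimate, exactly as you indicate; the price is that you must carry the uniformity of these constants through the fixed parametrization $\gamma$ of Assumption \ref{ass:regu}, using $h_e\simeq h_E$ from Remark \ref{rm:length}. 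Both arguments are valid; the paper's is shorter, yours is more transparent about where the curved geometry actually enters.
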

\begin{proof}
By definition \eqref{eq:St}, for $\ww \in \VVb_k(E)$ we need to
prove that
\begin{equation}
\label{eq:st1}
\St(\ww, \ww) = \sum_{e \in \EdE} \sum_{i=1}^{k+1} |E|\boldsymbol{D_1}^{e,i}(\ww)^2
+ \sum_{j=2}^{\pi_k} |E|\boldsymbol{D_2}^{j}(\ww)^2
+ \sum_{l=1}^{\pi_{k -1}} |E|\boldsymbol{D_3}^{l}(\ww)^2  \lesssim \|\ww\|_{0,E}^2 \,.
\end{equation}
We start analysing the first term in the left-hand side.
Employing the $H(\diver)$ trace inequality \cite[Theorem 3.24]{monk:book} and Lemma \ref{lm:inverse}  it holds
\[
h_E^{-1} \|\ww \cdot \nn^E\|_{0,\partial E}^2 \lesssim
h_E^{-2} \|\ww \|_{0,\partial E}^2 +\|\diver \ww \|_{0, E}^2 \lesssim h_E^{-2} \|\ww \|_{0,E}^2
\qquad \forall \ww \in \VVb_k(E).
\]
Then, since $\|m_i\|_{L^{\infty}(\Ie)}\leq 1$ and $h_{\Ie} \lesssim h_E$ (cfr. Remark \ref{rm:length}), it follows that
\begin{gather}
\label{eq:st2}
\begin{aligned}
\sum_{e \in \EdE} \sum_{i=1}^{k+1} &|E|\boldsymbol{D_1}^{e,i}(\ww)^2  =
\sum_{e \in \EdE} \sum_{i=1}^{k+1} \frac{|E|}{h_e^2} \left(\int_e \ww \cdot
\nn^e \widetilde{m}_i \, {\rm d}e \right)^2
\\
&\lesssim
\sum_{e \in \EdE} \sum_{i=1}^{k+1}  \|\ww \cdot \nn^e\|_{0,e}^2 \|\widetilde{m}_i\|_{0,e}^2
\lesssim
\sum_{e \in \EdE} \|\ww \cdot \nn^e\|_{0,e}^2  \sum_{i=1}^{k+1} \int_e
\widetilde{m}_i^2 \, {\rm d}e
\\
&\lesssim
\sum_{e \in \EdE} \|\ww \cdot \nn^e\|_{0,e}^2  \sum_{i=1}^{k+1} \int_{\Ie} m_i^2
\|\gamma'\| \, {\rm d}\Ie
\lesssim
\sum_{e \in \EdE} \|\ww \cdot \nn^e\|_{0,e}^2 h_E \lesssim \|\ww\|_{0,E}^2.
\end{aligned}
\end{gather}
Consider the second term of~\eqref{eq:st1},
we apply Lemma~\ref{lm:inverse} and,
since $\|m_i\|_{L^{\infty}(E)}\leq~1$,
we infer
\begin{gather}
\label{eq:st3}
\begin{aligned}
\sum_{j=2}^{\pi_k} |E|\boldsymbol{D_2}^{j}(\ww)^2 &=
\sum_{j=2}^{\pi_k} |E| \left(\frac{h_E}{|E|} \int_E \diver \ww \, m_j \, {\rm
d}E\right)^2 \\
&\lesssim
\frac{h_E^2}{|E|} \sum_{j=2}^{\pi_k} \|\diver \ww\|_{0,E}^2 \|m_j\|_{0,E}^2
\lesssim
\sum_{j=2}^{\pi_k} h_E^2 \|\diver \ww\|_{0,E}^2
\lesssim \|\ww\|_{0,E}^2 \,.
\end{aligned}
\end{gather}
Finally for the last term in \eqref{eq:st1}, using again
$$
\|\mmp\|_{L^{\infty}(E)}\leq 1,\qquad\text{and}\qquad\|m_l\|_{L^{\infty}(E)} \leq 1,
$$
we get:
\begin{gather}
\label{eq:st4}
\begin{aligned}
\sum_{l=1}^{\pi_{k -1}} |E|\boldsymbol{D_3}^{l}(\ww)^2 &=
\sum_{l=1}^{\pi_{k -1}} |E| \left(\frac{1}{|E|} \int_E \ww \cdot \mmp m_l \,
{\rm d}E\right)^2 \\
&\lesssim
\frac{1}{|E|}  \|\ww\|_{0,E}^2 \|\mmp m_l\|_{0,E}^2
\lesssim \|\ww\|_{0,E}^2 \,.
\end{aligned}
\end{gather}
Collecting \eqref{eq:st2}, \eqref{eq:st3} and \eqref{eq:st4} in \eqref{eq:st1} we obtain the thesis.
\end{proof}

The  next step is to prove the coercivity of the bilinear form $\St$ with respect to  the $L^2$-norm.
We start by noting that any function $\ww \in \VV_k(E)$ can be decomposed as
\begin{equation}
\label{eq:w1w2}
\ww = \nabla \phi - \ROT \psi
\end{equation}
where $\phi$ and $\psi$ are defined by
\begin{equation}
\label{eq:w1w2a}
\left \{
\begin{aligned}
&\Delta \phi = \diver \ww \, & \text{in $E$,}  \\
& \nabla \phi\cdot \nn = \ww \cdot \nn^E \, & \text{on $\partial E$,}
\end{aligned}
\right.
\qquad \,\,
\text{and}
\qquad \,\,
\left \{
\begin{aligned}
&\Delta \psi = \rot \ww \, & \text{in $E$,} \\
& \psi = 0 \, & \text{on $\partial E$,}
\end{aligned}
\right.
\end{equation}
we can assume that $\phi$ is zero averaged.
Moreover the decomposition is $L^2$-orthogonal, i.e.
\begin{equation}
\label{eq:pitagora}
\|\ww\|^2_{0, E} = \|\nabla \phi\|^2_{0, E} + \|\ROT \psi\|^2_{0, E} \, .
\end{equation}
%
\noindent
Given a vector $\gv \defeq (g_i)_{i=1}^N$, let $\|\gv\|^2_{l^2} \defeq \sum_{i=1}^N g_i^2$ be its Euclidean norm.
The following  lemma for polynomials is easy to check.
\begin{lemma}
\label{lm:l2piccolo}
Let $E \in \Omega_h$ and let $n \in \mathbb{N}$ a fixed integer.
Under Assumptions \ref{ass:regu}, \ref{ass:star} and~\ref{ass:mesh},
let $\gv \defeq (g_r)_{r=1}^{\pi_n}$ be a vector of real numbers and $g \defeq \sum_r^{\pi_n} g_r \, m_r \in \Pk_n(E)$, where $m_r \in \Mk_n(E)$.
Then we have the following norm equivalence
\[
h_E^2 \, \|\gv\|^2_{l^2} \lesssim \|g\|^2_{0, E} \lesssim  h_E^2 \, \|\gv\|^2_{l^2} \,.
\]
Moreover let $\gv \defeq (g_s)_{s=1}^{n+1}$ be a vector of real numbers and $\widetilde{g} \defeq \sum_s^{n+1} g_s \, \widetilde{m}_s \in \Pkt_n(e)$, where $\widetilde{m}_s \in \Mkt_n(e)$.
Then we have the following norm equivalence
\[
h_E \, \|\gv\|^2_{l^2} \lesssim \|g\|^2_{0, e} \lesssim  h_E \, \|\gv\|^2_{l^2} \,.
\]
\end{lemma}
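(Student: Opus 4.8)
The plan is to prove both norm equivalences by a scaling argument, reducing each statement to a uniform equivalence of norms on a fixed finite-dimensional space evaluated over a compact family of reference configurations. The two upper bounds are elementary: since $\norm{m_r}_{L^\infty(E)} \leq 1$ for every $m_r \in \Mk_n(E)$ and $\abs{E} \lesssim h_E^2$, the triangle inequality together with Cauchy--Schwarz gives $\norm{g}_{0,E} \lesssim h_E \sum_r \abs{g_r} \lesssim h_E \norm{\gv}_{l^2}$, the hidden constant depending only on $\pi_n$; the edge upper bound is analogous, using $\norm{\widetilde{m}_s}_{L^\infty(e)} \leq 1$ and $\abs{e} = h_e \lesssim h_E$.

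For the element lower bound I would introduce the dilation $\hat{\xx} = (\xx - \xx_E)/h_E$ mapping $E$ onto a rescaled element $\widehat{E}$ of unit diameter. Under this map the scaled monomials $m_r$ become the standard monomials $\hat{\xx}^{\bm{\beta}}$ on $\widehat{E}$, and the change of variables yields $\norm{g}_{0,E}^2 = h_E^2\,\norm{\hat{g}}_{0,\widehat{E}}^2$ with $\hat{g} = \sum_r g_r\,\hat{\xx}^{\bm{\beta}}$. Since the monomials are linearly independent on any domain with nonempty interior, the map $\gv \mapsto \norm{\hat{g}}_{0,\widehat{E}}$ is a norm on $\R^{\pi_n}$ and is therefore equivalent to $\norm{\gv}_{l^2}$; the crux is that the equivalence constants can be chosen independently of $E$. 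This uniformity follows from Assumption \ref{ass:star}: every rescaled element $\widehat{E}$ is star-shaped with respect to a ball of radius $\geq \rho$ and has diameter $1$, so the admissible reference shapes form a compact family, and the smallest eigenvalue of the continuously varying Gram matrix of the monomials stays bounded away from zero. Combining this with the change of variables gives $h_E^2 \norm{\gv}_{l^2}^2 \lesssim \norm{g}_{0,E}^2$.

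For the edge estimate I would pass to the parametrization $\gamma \colon \Ie \to e$, writing $\norm{\widetilde{g}}_{0,e}^2 = \int_{\Ie} \abs{g}^2\,\norm{\gamma'}\,{\rm d}t$ with $g = \sum_s g_s\,m_s \in \Pk_n(\Ie)$. By Remark \ref{rm:length} the parametrization is fixed with $\gamma,\gamma^{-1} \in W^{1,\infty}$, so $\norm{\gamma'}$ is bounded above and below by positive constants; hence $\norm{\widetilde{g}}_{0,e}^2$ is comparable to the unweighted one-dimensional norm $\norm{g}_{0,\Ie}^2$. A one-dimensional dilation of $\Ie$ to a unit interval then gives $\norm{g}_{0,\Ie}^2 \sim h_{\Ie}\,\norm{\gv}_{l^2}^2$, and since $h_{\Ie} \sim h_E$ (again Remark \ref{rm:length}) we obtain $h_E \norm{\gv}_{l^2}^2 \lesssim \norm{\widetilde{g}}_{0,e}^2 \lesssim h_E \norm{\gv}_{l^2}^2$.

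The only genuinely non-routine point is the uniformity of the equivalence constants in the element case: one must argue that the lowest eigenvalue of the Gram matrix of the monomials does not degenerate as the element shape varies within the family allowed by Assumption \ref{ass:star}. I expect this to be the main obstacle to formulate rigorously, and it is precisely where the shape-regularity hypothesis enters; the remaining arguments are bookkeeping of scaling factors.
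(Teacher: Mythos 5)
Your proposal is correct, and in fact the paper offers no proof at all for this lemma --- it simply declares it ``easy to check'' --- so your scaling argument is exactly the kind of justification the authors are leaving implicit. The upper bounds via $\|m_r\|_{L^\infty(E)}\leq 1$, $|E|\lesssim h_E^2$ and Cauchy--Schwarz are fine, as is the treatment of the edge case through the fixed parametrization $\gamma$ with $0<c\leq\|\gamma'\|\leq C$ and $h_{\Ie}\sim h_E$. For the one point you flag as delicate --- uniformity of the lower bound over the element shapes --- there is a slightly cleaner route than compactness of the whole family of rescaled shapes: since $E\supset B_E$ by Assumption~\ref{ass:star}, one has $\|g\|_{0,E}\geq\|g\|_{0,B_E}$, and after rescaling one only needs the smallest eigenvalue of the Gram matrix of the monomials $\hat{\xx}^{\bm{\beta}}$ over a ball of radius $\geq\rho$ whose center ranges over the (compact) unit ball; this is a continuous, positive function of the center, hence bounded below by a constant depending only on $\rho$ and $n$. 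That avoids having to topologize the family of star-shaped domains and makes precise where the shape-regularity constant enters.
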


\begin{prop}
\label{pr:coercivity}
Let $E \in \Omega_h$.
Under Assumptions \ref{ass:regu}, \ref{ass:star} and~\ref{ass:mesh} the following holds
\[
\|\ww\|_{0,E}^2 \lesssim  \St(\ww, \ww) \quad \forall \ww \in \VV_k(E).
\]
\end{prop}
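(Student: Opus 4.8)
The plan is to exploit the orthogonal Helmholtz-type decomposition \eqref{eq:w1w2}--\eqref{eq:pitagora}. Writing $\ww = \nabla \phi - \ROT \psi$ and using the Pythagorean identity \eqref{eq:pitagora}, it suffices to bound $\|\nabla\phi\|_{0,E}^2$ and $\|\ROT\psi\|_{0,E}^2$ separately by $\St(\ww,\ww)$. Morally, $\nabla\phi$ is the ``divergence/normal'' component and will be controlled by $\boldsymbol{D_1}$ and $\boldsymbol{D_2}$, while $\ROT\psi$ is the ``rotational'' component and will be controlled by $\boldsymbol{D_3}$. Throughout I will freely use that $|E|\simeq h_E^2$ and the scaling estimate $\|\mmp m_l\|_{0,E}\lesssim h_E$.

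For the gradient part I would integrate by parts using the boundary value problem \eqref{eq:w1w2a} for $\phi$, obtaining $\|\nabla\phi\|_{0,E}^2 = -\int_E \phi\,\diver\ww\,{\rm d}E + \int_{\partial E}\phi\,(\ww\cdot\nn^E)\,{\rm d}e$. Since $\phi$ is zero-averaged, a Poincar\'e inequality gives $\|\phi\|_{0,E}\lesssim h_E\|\nabla\phi\|_{0,E}$ and a trace inequality $\|\phi\|_{0,\partial E}\lesssim h_E^{1/2}\|\nabla\phi\|_{0,E}$, whence $\|\nabla\phi\|_{0,E}\lesssim h_E\|\diver\ww\|_{0,E} + h_E^{1/2}\|\ww\cdot\nn^E\|_{0,\partial E}$. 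Both right-hand quantities are polynomial, and I would bound them by the degrees of freedom through Lemma \ref{lm:l2piccolo}: the coefficients of $\ww\cdot\nn^e\in\Pkt_k(e)$ are controlled by $\boldsymbol{D_1}$, while the coefficients of $\diver\ww\in\Pk_k(E)$ are controlled by $\boldsymbol{D_2}$ together with the mean $\int_E\diver\ww\,{\rm d}E = \sum_{e\in\EdE}\sigma_{E,e}\int_e\ww\cdot\nn^e\,{\rm d}e$, which is a combination of $\boldsymbol{D_1}$ exactly as in the computability discussion of Section \ref{sec:discrete_forms}. This gives $\|\nabla\phi\|_{0,E}^2\lesssim\St(\ww,\ww)$, with no genuine difficulty.

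The rotational part is the crux. From $\psi = 0$ on $\partial E$ and Poincar\'e I get $\|\ROT\psi\|_{0,E} = \|\nabla\psi\|_{0,E}\lesssim h_E\|\Delta\psi\|_{0,E} = h_E\|\rot\ww\|_{0,E}$, so I must estimate the polynomial $\rot\ww\in\Pk_{k-1}(E)$ by $\boldsymbol{D_3}$. The decisive point is that, since $\psi$ vanishes on $\partial E$, the integration by parts $\int_E\ROT\psi\cdot\qq\,{\rm d}E = \int_E\psi\,\rot\qq\,{\rm d}E$ carries no boundary term; taking $\qq = \mmp m_l$ yields $|E|\,\boldsymbol{D_3}^l(\ROT\psi) = \int_E\psi\,\rot(\mmp m_l)\,{\rm d}E$. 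By Remark \ref{rm:rot} the family $\{\rot(\mmp m_l)\}_{l=1}^{\pi_{k-1}}$ is a basis of $\Pk_{k-1}(E)$, so I may expand $\rot\ww = \sum_l\lambda_l\,\rot(\mmp m_l)$ and compute $\|\ROT\psi\|_{0,E}^2 = -\int_E\psi\,\rot\ww\,{\rm d}E = -\sum_l\lambda_l\,|E|\,\boldsymbol{D_3}^l(\ROT\psi)$. Lemma \ref{lm:l2piccolo} together with the isomorphism gives the coefficient equivalence $\sum_l\lambda_l^2\simeq\|\rot\ww\|_{0,E}^2$, and the inverse estimate \eqref{eq:rotinv} of Lemma \ref{lm:inverse}, applied to $\ROT\psi\in H(\rot,E)$ in the form $\|\rot\ww\|_{0,E}=\|\rot(\ROT\psi)\|_{0,E}\lesssim h_E^{-1}\|\ROT\psi\|_{0,E}$, lets me absorb one factor of $\|\ROT\psi\|_{0,E}$ after a Cauchy--Schwarz step, producing $\|\ROT\psi\|_{0,E}^2\lesssim|E|\sum_l\boldsymbol{D_3}^l(\ROT\psi)^2$.

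It remains to transfer $\boldsymbol{D_3}(\ROT\psi)$ onto $\boldsymbol{D_3}(\ww)$. Since $\boldsymbol{D_3}^l(\ROT\psi) = \boldsymbol{D_3}^l(\nabla\phi) - \boldsymbol{D_3}^l(\ww)$, and since the crude bound $|E|\sum_l\boldsymbol{D_3}^l(\nabla\phi)^2\lesssim\|\nabla\phi\|_{0,E}^2$ follows from Cauchy--Schwarz and $\|\mmp m_l\|_{0,E}\lesssim h_E$, I conclude $\|\ROT\psi\|_{0,E}^2\lesssim |E|\sum_l\boldsymbol{D_3}^l(\ww)^2 + \|\nabla\phi\|_{0,E}^2\lesssim\St(\ww,\ww)$, the last step using the gradient estimate already obtained. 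Collecting the two bounds into \eqref{eq:pitagora} gives the thesis. I expect the rotational estimate to be the only real obstacle: the cancellation of the boundary term thanks to $\psi|_{\partial E}=0$, and the use of Remark \ref{rm:rot} to push the polynomial $\rot\ww$ onto the $\boldsymbol{D_3}$ moments, are precisely what make $\boldsymbol{D_3}$ sufficient to control $\|\ROT\psi\|_{0,E}$ without ever appealing to the tangential trace of $\ww$, which is not a degree of freedom.
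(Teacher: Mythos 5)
Your proposal is correct and follows essentially the same route as the paper's proof: the orthogonal decomposition \eqref{eq:w1w2}--\eqref{eq:pitagora}, integration by parts with scaled Poincar\'e and trace inequalities plus Lemma \ref{lm:l2piccolo} for the $\nabla\phi$ part, and the vanishing trace of $\psi$ together with Remark \ref{rm:rot} to push $\rot\ww$ onto the $\boldsymbol{D_3}$ moments for the $\ROT\psi$ part. The only differences are bookkeeping ones: you invert the (uniformly well-conditioned) mass matrices where the paper inserts the projections $\Pi^k_0\phi$ and $\widetilde{\Pi}^k_0\phi$, and you control the coefficients $\lambda_l$ via the inverse estimate \eqref{eq:rotinv} where the paper identifies $\xxp p_{k-1}$ with $\Pi_0^k\ROT\psi$ and uses the $L^2$-contractivity of the projection --- your variant is, if anything, slightly more robust at that step.
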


\begin{proof}
Let $\ww \in \VV_k(E)$, since the decomposition \eqref{eq:w1w2} is $L^2$-orthogonal we need to prove that
\begin{equation}
\label{eq:co0}
\|\nabla \phi\|_{0,E}^2 \lesssim  \St(\ww, \ww)
\qquad \text{and} \qquad
\|\ROT \psi\|_{0,E}^2 \lesssim  \St(\ww, \ww) \,.
\end{equation}
We start with the first bound in \eqref{eq:co0} and we infer
\begin{gather}
\label{eq:co1}
\begin{gathered}
\|\nabla \phi\|_{0,E}^2 = \int_E  \ww \cdot \nabla \phi \, {\rm d}E   =
- \int_E \diver \ww \, \phi \, {\rm d}E + \sum_{E \in \EdE} \sigma_{E,e} \int_e
\ww \cdot \nn^e \phi \, {\rm d}e
\\
= - \int_E \diver \ww \, \Pi_0^k \phi \, {\rm d}E + \sum_{E \in \EdE}
\sigma_{E,e} \int_e \ww \cdot \nn^e \widetilde{\Pi}_0^k\phi \, {\rm d}e
\end{gathered}
\end{gather}
where in the last equation we use the fact that $\diver \ww \in \Pk_k(E)$ and $\ww \cdot
\nn^e \in \Pkt_k(e)$ and definitions \eqref{eq:projection_def} and
\eqref{eq:projection_edge}, respectively.
Let us set
\[
\Pi^k_0 \phi = \sum_{j=1}^{\pi_k} c_j m_j \text{ with } m_j \in \Mk_k(E),
\quad
\widetilde{\Pi}_0^k \phi = \sum_{i=1}^{k+1} d_i \widetilde{m}_i
\text{ with } \widetilde{m}_i \in \Mkt_k(e).
\]
Then from \eqref{eq:co1} we infer
\begin{multline*}
\|\nabla \phi\|_{0,E}^2 = -\sum_{j=2}^{\pi_k} c_j \int_E \diver \ww \, m_j \, {\rm d}E
- c_1 \int_{\partial E} \ww \cdot \nn^E m_j \, {\rm d}e  + \\+
\sum_{E \in \EdE} \sigma_{E,e} \sum_{i=1}^{k+1} d_i \int_e \ww \cdot \nn^e
\widetilde{m}_i \, {\rm d}e
\end{multline*}
that is
\begin{gather}
\label{eq:co2}
\|\nabla \phi\|_{0,E}^2 = -\sum_{j=2}^{\pi_k} c_j \int_E \diver \ww \, m_j \, {\rm d}E +
\sum_{E \in \EdE} \sigma_{E,e} \sum_{i=1}^{k+1} \hat{d}_i \int_e \ww \cdot \nn^e
\widetilde{m}_i \, {\rm d}e
\end{gather}
where $\hat{d}_i = d_i - c_i$ if $i=1$, $\hat{d}_i= d_i$ otherwise.

Using Lemma \ref{lm:l2piccolo}, the continuity of $\Pi_0^k$ with respect to the $L^2$-norm and a scaled Poincar\'e inequality for the zero averaged function $\phi$, the bulk integral in \eqref{eq:co2} can be bounded as follows:
\begin{gather}
\label{eq:co3}
\begin{aligned}
-\sum_{j=2}^{\pi_k} c_j  & \int_E \diver \ww \, m_j \, {\rm d}E =
- \sum_{j=2}^{\pi_k} c_j \, \frac{|E|}{h_E} \, \boldsymbol{D_2}^j(\ww)
\\
& \lesssim  \biggl( \sum_{j=1}^{\pi_k} c_j^2  \biggr)^{1/2} \biggl( |E| \sum_{j=2}^{\pi_k}  \boldsymbol{D_2}^i(\ww)^2 \biggr)^{1/2}
 \lesssim  h_E^{-1} \, \|\Pi_0^k \phi\|_{0,E} \, \St(\ww, \ww)^{1/2}\\
&\lesssim h_E^{-1} \, \|\phi\|_{0,E} \, \St(\ww, \ww)^{1/2}
\lesssim \|\nabla \phi\|_{0,E} \, \St(\ww, \ww)^{1/2} \,.
\end{aligned}
\end{gather}
For the boundary integral in \eqref{eq:co2}, employing Lemma \ref{lm:l2piccolo}, we infer
\[
\begin{aligned}
\sum_{e \in \EdE}  \sigma_{E,e} &\sum_{i=1}^{k+1} \hat{d}_i \int_e \ww \cdot
\nn^e \widetilde{m}_i \, {\rm d}e
 \lesssim
\sum_{e \in \EdE} \sigma_{E,e} \sum_{i=1}^{k+1} \hat{d}_i h_e \boldsymbol{D_1}^{e,i}(\ww)
\\
&\lesssim
\sum_{e \in \EdE} \biggl(\sum_{i=1}^{k+1} \hat{d}_i^2 \biggr)^{1/2}
\biggl( |E| \sum_{i=1}^{k+1} \boldsymbol{D_1}^{e,i}(\ww) \biggr)^{1/2}
\\
&\lesssim
\sum_{e \in \EdE} \biggl(c_1^2 + \sum_{i=1}^{k+1} d_i^2\biggr)^{1/2}
\biggl( |E| \sum_{i=1}^{k+1} \boldsymbol{D_1}^{e,i}(\ww) \biggr)^{1/2}\\
&
\lesssim
\sum_{e \in \EdE} \biggl(h_E^{-1}\| \Pi_0^k \phi\|_{0,E} +  h_E^{-1/2}
\|\widetilde{\Pi}_0^k \phi\|_{0,e} \biggr)
\biggl( |E| \sum_{i=1}^{k+1} \boldsymbol{D_1}^{e,i}(\ww)^2 \biggr)^{1/2} \,.
\end{aligned}
\]
Then, using the continuity of $\widetilde{\Pi}_0^k$ with respect to the $L^2$-norm and the $H^1$ trace inequality for the zero averaged function $\phi$, from  previous bound we get
\begin{gather}
\label{eq:co4}
\begin{aligned}
&\sum_{e \in \EdE}  \sigma_{E,e} \sum_{i=1}^{k+1} \hat{d}_i \int_e \ww \cdot
\nn^e \widetilde{m}_i \, {\rm d}e
\\
&\lesssim
\biggl( h_E^{-2}\| \Pi_0^k \phi\|^2_{0,E} +  h_E^{-1}  \sum_{e \in \EdE}
\|\widetilde{\Pi}_0^k \phi\|_{0,e}^2 \biggr)^{1/2}
\biggl( |E| \sum_{e \in \EdE}\sum_{i=1}^{k+1} \boldsymbol{D_1}^{e,i}(\ww)^2 \biggr)^{1/2}
\\
& \lesssim
\biggl( h_E^{-2}\| \phi\|^2_{0,E} +  h_E^{-1}  \sum_{e \in \EdE} \|\phi\|_{0,e}^2 \biggr)^{1/2} \,  \St(\ww, \ww)^{1/2}
 \\
& \lesssim
\biggl( h_E^{-2}\| \phi\|^2_{0,E} +  h_E^{-1}  \|\phi\|_{0,\partial E}^2 \biggr)^{1/2} \,  \St(\ww, \ww)^{1/2}
\lesssim \|\nabla \phi\|_{0,E} \, \St(\ww, \ww)^{1/2} \,.
\end{aligned}
\end{gather}
Collecting \eqref{eq:co4} and \eqref{eq:co3} in \eqref{eq:co2},  we obtain the first bound in \eqref{eq:co0}.

Concerning the $\ROT$ part of $\ww$ in decomposition \eqref{eq:w1w2}, recalling \eqref{eq:w1w2a}, we infer
\begin{equation}
\label{eq:co7}
\begin{split}
\|\ROT \psi\|_{0,E}^2 &= \int_E \ROT \psi \cdot \ROT \psi \, {\rm d}E = \int_E \Delta \psi \, \psi \, {\rm d}E =  \int_E \rot \ww \, \psi \, {\rm d}E \,.
\end{split}
\end{equation}
Since $\rot \ww = q_{k-1} \in \Pk_{k-1}(E)$ there exists $p_{k-1} \in \Pk_{k-1}(E)$ such that
$\rot \ww = \rot(\xxp p_{k-1})$ (cfr. Remark \ref{rm:rot}).
Moreover being $\ROT \psi$ orthogonal with respect to the gradients, by decomposition \eqref{eq:polydec}, it holds $\xxp p_{k-1}= \Pi_0^k \ROT \psi$.
Therefore from \eqref{eq:co7} we obtain
\begin{gather}
\label{eq:co8}
\begin{aligned}
\|\ROT \psi\|_{0,E}^2 &= \int_E \rot(\xxp p_{k-1})\, \psi \, {\rm d}E =
\int_E \xxp p_{k-1}\cdot \ROT \psi \, {\rm d}E
\\
&= \int_E \xxp p_{k-1} \cdot \ww  \, {\rm d}E -
\int_E \xxp p_{k-1} \cdot \nabla \phi  \, {\rm d}E \,.
\end{aligned}
\end{gather}
Let us write $\xxp p_{k-1}$ in the monomial basis: it exists $g_l \in
\mathbb{R}$, for $l=1, \ldots, \pi_{k-1}$, such that
\[
\xxp p_{k-1} \defeq \sum_{l=1}^{\pi_{k-1}} g_l \mmp m_l \,,
\]
and let us analyse the two adds in the right-hand side of \eqref{eq:co8}.
For the first one, using Lemma \ref{lm:l2piccolo},  we infer
\begin{gather}
\label{eq:co9}
\begin{aligned}
\int_E \xxp p_{k-1} \,\cdot \ww   \, {\rm d}E &=
\sum_{l=1}^{\pi_{k-1}} g_l \int_E \mmp m_l \,\cdot \ww  \, {\rm d}E =
\sum_{l=1}^{\pi_{k-1}} |E| g_l \boldsymbol{D_3}^l(\ww)
\\
& \lesssim h_E \biggl(\sum_{l=1}^{\pi_{k-1}} g_l^2 \biggr)^{1/2} \biggl( |E|\sum_{l=1}^{\pi_{k-1}} \boldsymbol{D_3}^l(\ww)^2\biggr)^{1/2}
\\
& \lesssim
\|\xxp p_{k-1}\|_{0,E} \, \St(\ww, \ww)^{1/2}
\lesssim
\|\ROT \psi\|_{0,E} \, \St(\ww, \ww)^{1/2}\,.
\end{aligned}
\end{gather}
For the second term in \eqref{eq:co8}, using the first bound in \eqref{eq:co0}, we get
\begin{gather}
\label{eq:co10}
\begin{aligned}
\int_E \xxp p_{k-1} \cdot \nabla \phi   \, {\rm d}E &\lesssim \|\xxp p_{k-1}\|_{0,E} \|\nabla \phi\|_{0,E} \lesssim
\|\xxp p_{k-1}\|_{0,E} \, \St(\ww, \ww)^{1/2}
\\
&\lesssim
\|\ROT \psi\|_{0,E} \, \St(\ww, \ww)^{1/2}\,.
\end{aligned}
\end{gather}
Collecting \eqref{eq:co9} and \eqref{eq:co8} in \eqref{eq:co10} we obtain the second bound in \eqref{eq:co0}.
The thesis now follows from \eqref{eq:pitagora}.
\end{proof}
As a direct consequence of Proposition \ref{pr:interpolation}, Proposition \ref{pr:infsup}, Proposition \ref{pr:continuity} and Proposition \ref{pr:coercivity} we have the following result \cite{Brezzi2014,BeiraoVeiga2016}.
\begin{prop}
\label{pr:final}
Under Assumptions \ref{ass:regu}, \ref{ass:star} and \ref{ass:mesh},
the virtual element problem \eqref{pb:darcy_vem} has a unique solution $(\bm{q}_h, p_h) \in \VV_k(\Omega_h) \times Q_k(\Omega_h)$.
Moreover, let $(\bm{q}, p) \in \VV \times Q$ be the solution of problem \eqref{pb:darcy_weak} and assume that $\bm{q} \in [H^{k+1}(\Omega_h)]^2$ with $\diver \bm{q} \in H^{k+1}(\Omega_h)$,  $p$, $f \in H^{k+1}(\Omega_h)$, then the following error estimates hold:
\[
\begin{gathered}
\|\bm{q} - \bm{q}_h\|_{\VV} \lesssim h^{k+1}(|\bm{q}|_{k+1, \Omega_h} + |f|_{k+1, \Omega_h})\,,
\\
\|p - p_h\|_{Q} \lesssim h^{k+1} (|\bm{q}|_{k+1, \Omega_h} + |p|_{k+1,\Omega_h})\,.
\end{gathered}
\]
\end{prop}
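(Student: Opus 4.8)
The plan is to read Proposition~\ref{pr:final} as a concrete instance of the abstract well-posedness and convergence theory for saddle-point virtual element schemes (\cite{Boffi2013,Brezzi2014,BeiraoVeiga2016}), whose hypotheses are supplied almost verbatim by the four preceding propositions. First I would settle well-posedness through the Brezzi conditions. Continuity of $a_k$ on $\VVb_k(\Omega_h)$ follows from its definition \eqref{eq:ahE}, the $L^2$-boundedness of $\Pi_0^k$, and the upper stability bound of Proposition~\ref{pr:continuity}; continuity of $b$ and of $G,F$ is immediate. For ellipticity on the discrete kernel I would note that, since $\diver \VV_k(\Omega_h)=Q_k(\Omega_h)$, requiring $b(\vv_h,v_h)=0$ for all $v_h$ forces $\diver \vv_h=0$, so that $\|\vv_h\|_{\VV}=\|\vv_h\|_{0,\Omega_h}$ there. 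Combining Propositions~\ref{pr:continuity} and~\ref{pr:coercivity} yields the norm equivalence $\St(\vv_h,\vv_h)\simeq\|\vv_h\|_{0,E}^2$ on $\VV_k(E)$, which is exactly the stabilization hypothesis of the abstract framework and gives $a_k(\vv_h,\vv_h)\gtrsim\|\vv_h\|_{\VV}^2$ on the kernel. Together with the discrete inf-sup condition of Proposition~\ref{pr:infsup}, the classical theory then delivers a unique $(\qq_h,p_h)$.

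For the a priori estimate I would lean on the commuting diagram property \eqref{eq:divpf}. Setting $\boldsymbol{\delta}\defeq \qq_h-\Pi^k_{\rm F}\qq$, the discrete mass balance $b(\qq_h,v_h)=F(v_h)$ gives $\diver \qq_h = -\Pi_0^k f$, while \eqref{eq:divpf} together with $\diver \qq=-f$ gives $\diver \Pi^k_{\rm F}\qq=-\Pi_0^k f$; hence $\diver \boldsymbol{\delta}=0$ and $\boldsymbol{\delta}$ lies in the discrete kernel. Testing the discrete momentum equation with $\boldsymbol{\delta}$ annihilates $b(\boldsymbol{\delta},p_h)$, and testing the continuous momentum equation with $\boldsymbol{\delta}$ annihilates $b(\boldsymbol{\delta},p)=-(\diver \boldsymbol{\delta},p)_\Omega$, so subtracting yields the energy identity $a_k(\boldsymbol{\delta},\boldsymbol{\delta})=a(\qq,\boldsymbol{\delta})-a_k(\Pi^k_{\rm F}\qq,\boldsymbol{\delta})$. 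I would then insert $\Pi_0^k\qq$ and split the right-hand side into the consistency term $a(\qq,\boldsymbol{\delta})-a_k(\Pi_0^k\qq,\boldsymbol{\delta})$ plus the term $a_k(\Pi_0^k\qq-\Pi^k_{\rm F}\qq,\boldsymbol{\delta})$. The latter is controlled by continuity and by $\|\Pi_0^k\qq-\Pi^k_{\rm F}\qq\|_{0}\le \|\qq-\Pi_0^k\qq\|_{0}+\|\qq-\Pi^k_{\rm F}\qq\|_{0}\lesssim h^{k+1}|\qq|_{k+1,\Omega_h}$, by Lemma~\ref{lm:bramble} and Proposition~\ref{pr:interpolation}.

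The consistency term I would split once more as $a(\qq-\Pi_0^k\qq,\boldsymbol{\delta})+\bigl[a(\Pi_0^k\qq,\boldsymbol{\delta})-a_k(\Pi_0^k\qq,\boldsymbol{\delta})\bigr]$. The first summand is bounded by continuity and Lemma~\ref{lm:bramble} by $\lesssim h^{k+1}\|\boldsymbol{\delta}\|_{0}$. For the second, the key point is that $\Pi_0^k$ reproduces $[\Pk_k(E)]^2$, so the stabilization in \eqref{eq:ahE} vanishes on $\Pi_0^k\qq$ and $\nu(E)$ drops out entirely, leaving $\sum_{E}\int_E \mu\K^{-1}\Pi_0^k\qq\cdot(\boldsymbol{\delta}-\Pi_0^k\boldsymbol{\delta})\,{\rm d}E$; since $(I-\Pi_0^k)\boldsymbol{\delta}$ is $L^2$-orthogonal to $[\Pk_k(E)]^2$, I may subtract $\Pi_0^k(\mu\K^{-1}\Pi_0^k\qq)$ from the first factor and bound the result by $\lesssim h^{k+1}\|\boldsymbol{\delta}\|_{0}$, provided $\mu\K^{-1}$ is sufficiently smooth. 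Coercivity on the kernel then gives $\|\boldsymbol{\delta}\|_{0}\lesssim h^{k+1}$, and a triangle inequality closes the $L^2$ velocity bound; the divergence contribution is exactly $\diver(\qq-\qq_h)=f-\Pi_0^k f$, estimated again by Lemma~\ref{lm:bramble}, which together give the stated $\VV$-norm bound. Finally, for the pressure I would apply the inf-sup condition of Proposition~\ref{pr:infsup} to $p_h-\Pi_0^k p$, expressing $b(\vv_h,p_h-\Pi_0^k p)$ through the discrete momentum equation and bounding it by the already-controlled velocity error together with the same consistency estimate, delivering $\|p-p_h\|_{Q}\lesssim h^{k+1}(|\qq|_{k+1,\Omega_h}+|p|_{k+1,\Omega_h})$.

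The hard part will be the consistency (variational-crime) analysis, i.e.\ controlling $a(\qq,\boldsymbol{\delta})-a_k(\Pi_0^k\qq,\boldsymbol{\delta})$ without losing the optimal rate. The two observations that render it routine are the polynomial consistency $(I-\Pi_0^k)\pp=0$ for $\pp\in[\Pk_k(E)]^2$, which makes the stabilization invisible on polynomials, and the commuting diagram \eqref{eq:divpf}, which collapses the velocity error onto the discrete kernel where Proposition~\ref{pr:coercivity} applies. The only genuine caveat, tacit in the statement, is that $\mu\K^{-1}$ must be regular enough on each element so that the crime term $\int_E \mu\K^{-1}\Pi_0^k\qq\cdot(\boldsymbol{\delta}-\Pi_0^k\boldsymbol{\delta})\,{\rm d}E$ is of order $h^{k+1}$; for merely bounded coefficients one would have to replace $\mu\K^{-1}$ inside the projected term by its elementwise polynomial best approximation to recover the rate.
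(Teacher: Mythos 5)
Your proposal is correct and follows essentially the same route as the paper, which states Proposition~\ref{pr:final} as a direct consequence of Propositions~\ref{pr:interpolation}, \ref{pr:infsup}, \ref{pr:continuity} and \ref{pr:coercivity} via the abstract mixed-VEM convergence framework of \cite{Brezzi2014,BeiraoVeiga2016}; you have simply unwound that citation into the standard argument (Brezzi conditions, commuting diagram, discrete-kernel energy identity, consistency term). Your closing caveat about the regularity of $\mu\K^{-1}$ needed for the variational-crime term is a fair observation left tacit in the paper, which only assumes the coefficients bounded.
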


}
\newcommand{\dE}{\text{d}E}
\newcommand{\Or}[1]{O\left(h^{#1}\right)}

\section{Numerical tests}
\label{sec:numExe}
In this section some numerical examples are provided to describe the behaviour of the method and give numerical evidence of the theoretical results derived in the previous sections.
More specifically, we propose a comparison of the method with standard mixed virtual elements, in which the curved boundaries or interfaces of the domains are approximated by a straight edge interpolant. For brevity we will label the present approach which honours domain geometry as \texttt{withGeo}, and the standard approach as \texttt{noGeo}.


We use the projection operators introduced in~\eqref{eq:projection_def} to define
the following error indicators for both variables; for a given exact solution $(\bm{q}, p)$ of Problem~\ref{pb:darcy_model},
we compute:
\begin{itemize}
 \item \textbf{velocity $L^2$ error:}
 $$
    e_{\bm{q}}^2 \defeq {\sum_{E\in\Omega_h} \|\bm{q} - {\Pi}^k_0\bm{q}_h\|^2_E}\,,
 $$
 \item \textbf{pressure $L^2$ error:}
 $$
    e_p^2 \defeq {\sum_{E\in\Omega_h} \|p - p_h\|^2_E}\,.
 $$
\end{itemize}
Moreover, to proceed with the convergence analysis, we define the mesh-size parameter
\begin{equation*}
h = \frac{1}{L_E}\sum_{E\in\Omega_h} h_E\,,
\end{equation*}
For each test we build a sequence of four meshes with decreasing mesh size parameter $h$ and the trend of each error indicator is computed and compared to the
expected convergence trend, which, for sufficiently regular data is $\Or{k+1}$ in accordance to Proposition \ref{pr:final}.

\subsection{Curved boundary}\label{sub:bound}

\paragraph{Problem description}
In this subsection we consider Problem~\ref{pb:darcy_model} on the domain $\Omega$
shown in Figure~\ref{fig:domExe1}.
Such domain is obtained from the unit square $(0,\,1)^2$ deforming the top and the bottom edges to make them curvilinear, i.e.,
they are the graph of the following cubic functions:
$$
g_1(x) = \frac{1}{2}x^2(x-1)+1\qquad\text{and}\qquad g_2(x) = \frac{1}{2}x^2(x-1)\,.
$$
We set the right hand side and the boundary conditions
in such a way that the exact solution of Problem~\ref{pb:darcy_model} is the couple:
\begin{equation*}
\bm{q}(x,\,y) = \left(\begin{array}{r}
                 \pi\,\cos(\pi\,x)\,\cos(\pi\,y)\\
                 -\pi\,\sin(\pi\,x)\,\sin(\pi\,y)
                 \end{array}\right)
\qquad\text{and}\qquad
p(x,\,y) = \sin(\pi x)\,\cos(\pi y)\,.
\end{equation*}

In this first example we take $\mu=1.$ and 
we consider a constant tensor $\K= \mathbb{I}$, 
where $\mathbb{I}$ is the identity matrix.

\begin{figure}[!htb]
\centering
\includegraphics[width=0.20\textwidth]{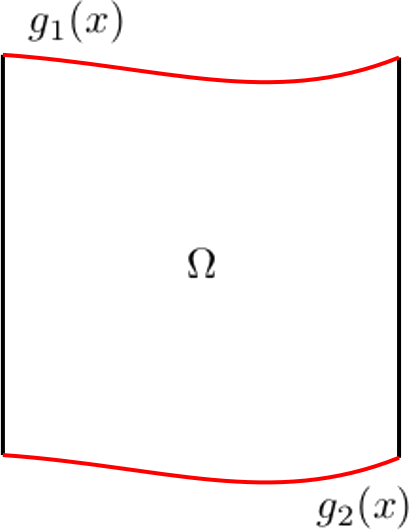}
\caption{Curved boundary: domain $\Omega$ considered in such example,
curved boundaries are highlighted in red.}
\label{fig:domExe1}
\end{figure}

\paragraph{Meshes}
Computational meshes are obtained starting from polygonal meshes defined on the unit square $(0,1)^2$ and subsequently modified, following the idea proposed in~\cite{BeiraodaVeiga2019}.
In the present case, {only} the $y$-component of a generic point $P$ is modified, i.e.,
the point $P(x_P,\,y_P)$ becomes $P'(x_P',\,y_P')$ where
$$
{
x_P' = x_P\qquad\text{and}\qquad
y_P' = \begin{cases}
    y_P + g_2(x_P) (1 - 2y_P) &\text{if}\:y_P\leq 0.5\\[0.5em]
    1 - y_P + g_1(x_P)\,(2y_P-1) &\text{if}\:y_P> 0.5
    \end{cases}\,.
}
$$
{The curved part of the boundary is further exactly reproduced for the \texttt{withGeo} case.}
As initial meshes we consider the following types of discretization of the unit square:
\textit{i)} \texttt{quad}, a uniform mesh composed by squares;
\textit{ii)} \texttt{hexR}, a mesh composed by hexagons;
\textit{iii)} \texttt{hexD}, a mesh composed by distorted hexagons;
\textit{iv)} \texttt{voro}, a centroidal Voronoi tessellation.
The last two types of meshes have some interesting features which challenge the robustness of the virtual element approach:
in particular $\texttt{hexD}$ meshes have distorted elements, whereas
$\texttt{voro}$ meshes have tiny edges, see Figure~\ref{fig:meshGene}.

\begin{figure}[!htb]
\centering
\begin{tabular}{cccc}
\texttt{quad} &\texttt{hexR} & \texttt{hexD} &\texttt{voro} \\
\includegraphics[width=0.21\textwidth]{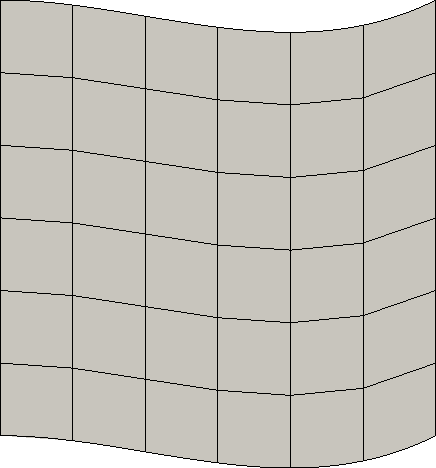} &
\includegraphics[width=0.21\textwidth]{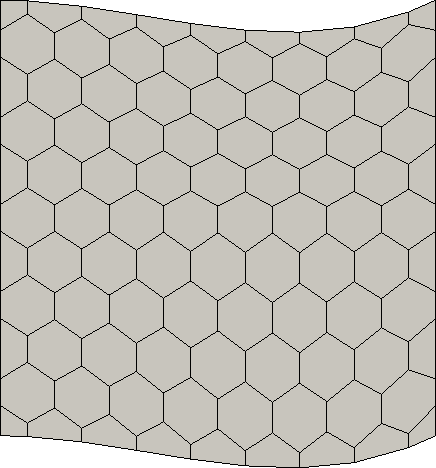}  &
\includegraphics[width=0.21\textwidth]{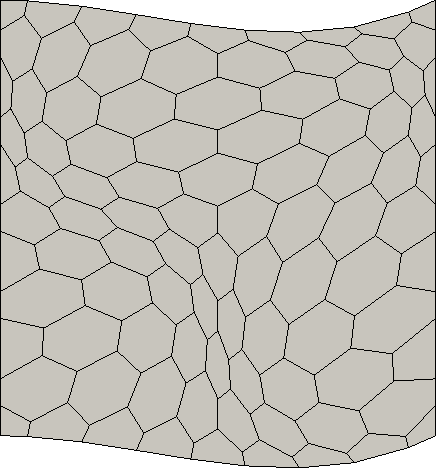} &
\includegraphics[width=0.21\textwidth]{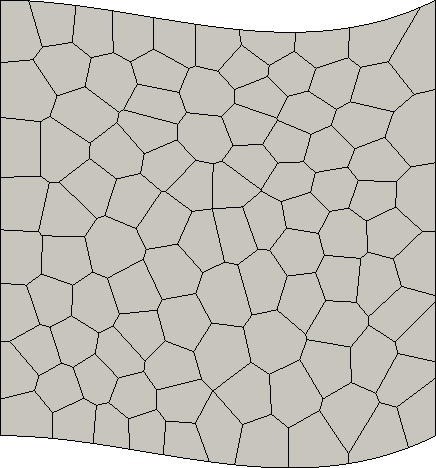} \\
\end{tabular}
\caption{Curved boundary: types of discretization used to proceed with the convergence analysis.}
\label{fig:meshGene}
\end{figure}

\paragraph{Results}
In Figures~\ref{fig:convExe1Quad},~\ref{fig:convExe1Bee},~\ref{fig:convExe1Hexa} and~\ref{fig:convExe1Voro},
we collect the results for the various types of meshes.
The reported convergence lines of the \texttt{withGeo} and \texttt{noGeo} approaches coincide for polynomial degrees $k=0$ and 1. 
They have the expected convergence rate of $\Or{1}$ and $\Or{2}$, respectively.
On the contrary, 
for polynomial degree $k>1$ the trend of both velocity and pressure $L^2$ errors is different between the two strategies.

More specifically, the convergence trends of the \texttt{noGeo} case is bounded by the geometrical representation error to $\Or{2}$, as this error dominates the accuracy of the approximation with mixed virtual elements. On the contrary the proposed approximation scheme \texttt{withGeo} behaves as expected for both velocity and pressure variables and for each approximation degree, showing the optimal convergence trend for the used polynomial degree.
Such behaviour is in line to what observed in~\cite{BeiraodaVeiga2019} for a Laplace problem.

\begin{figure}[!htb]
\centering
\begin{tabular}{cc}
\multicolumn{2}{c}{\texttt{quad}} \\
\includegraphics[width=0.49\textwidth]{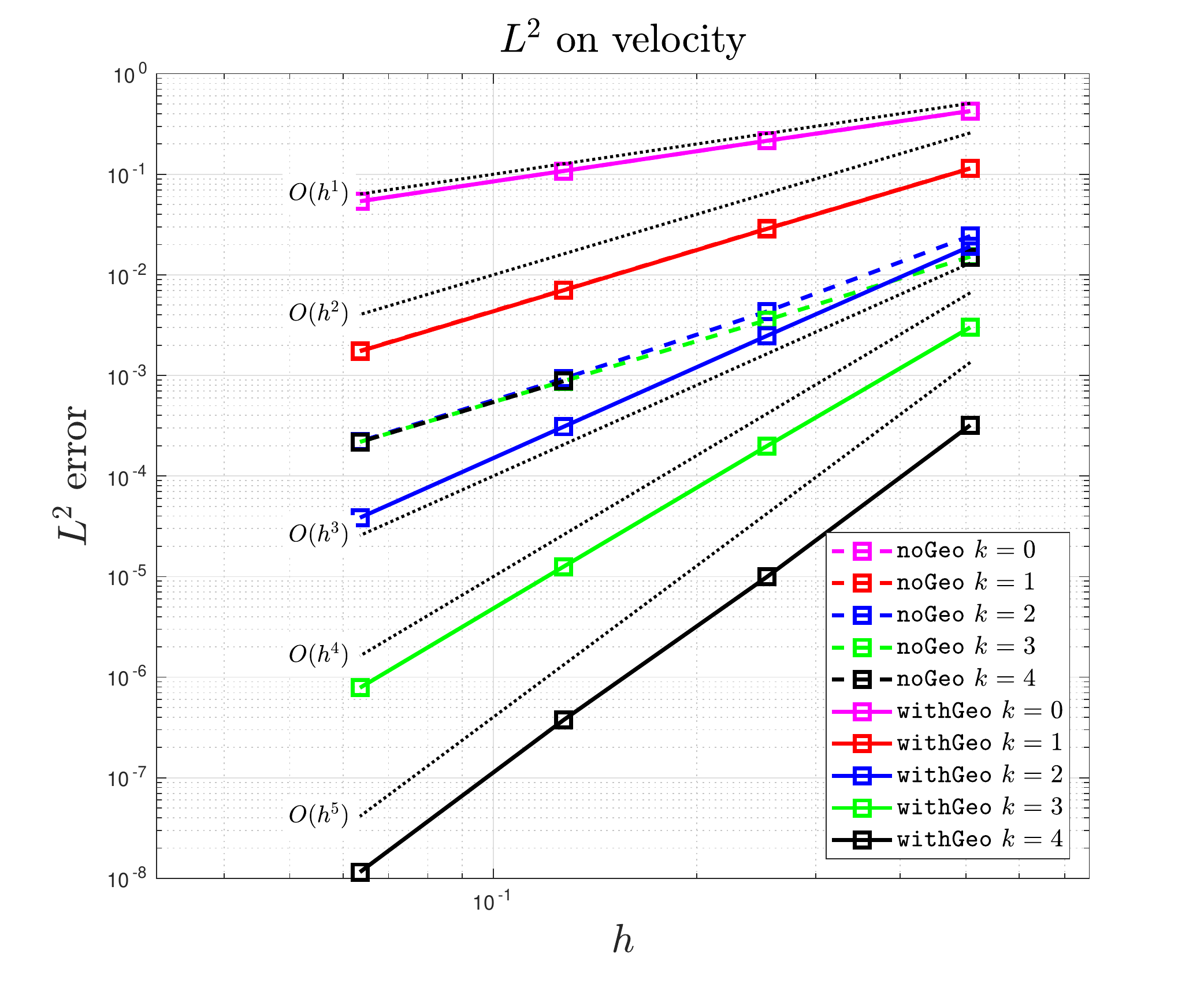} &
\includegraphics[width=0.49\textwidth]{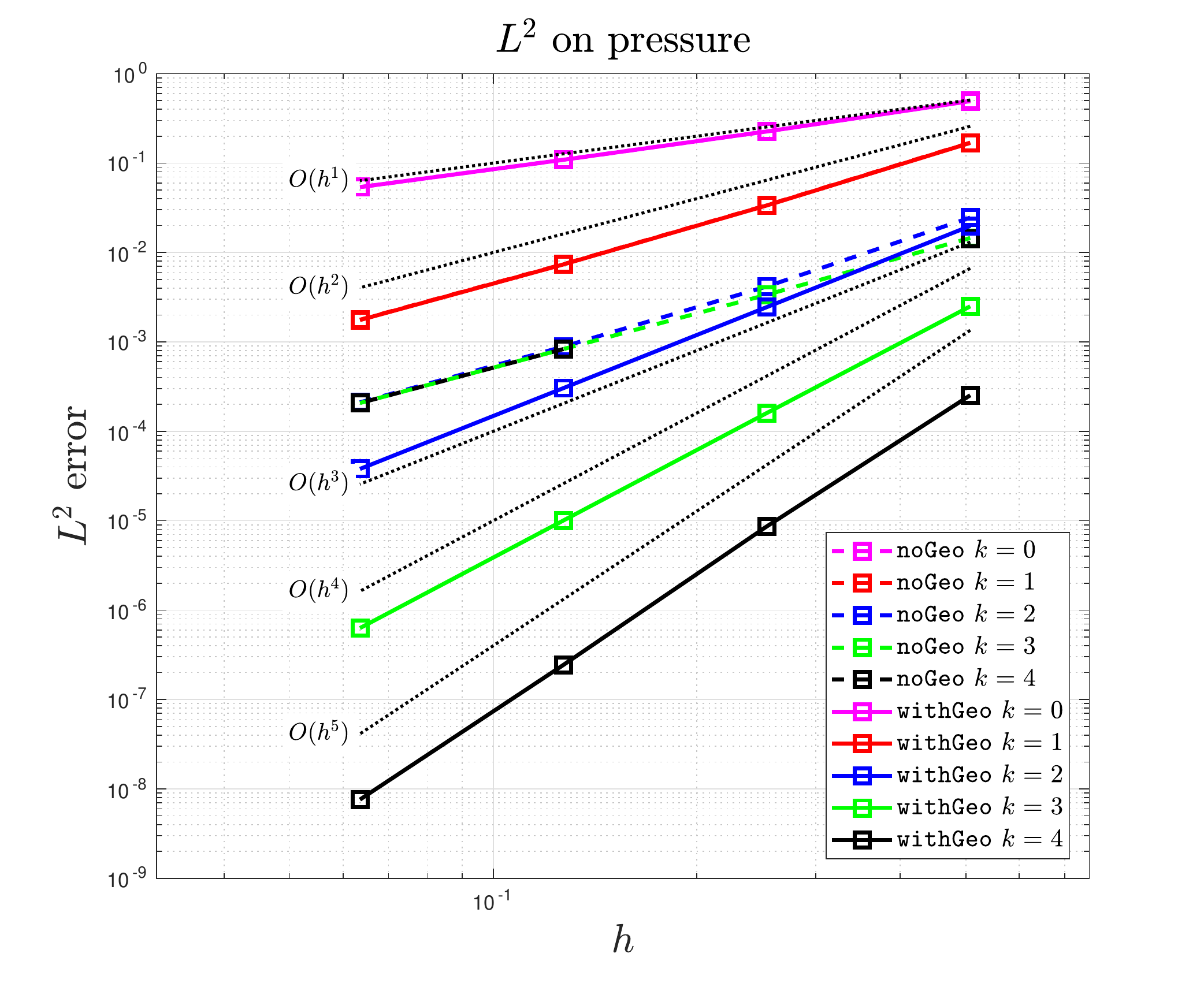}\\
\end{tabular}
\caption{Curved boundary: convergence lines for \texttt{quad} meshes for each VEM approximation degrees.}
\label{fig:convExe1Quad}
\end{figure}

\begin{figure}[!htb]
\centering
\begin{tabular}{cc}
\multicolumn{2}{c}{\texttt{hexR}} \\
\includegraphics[width=0.49\textwidth]{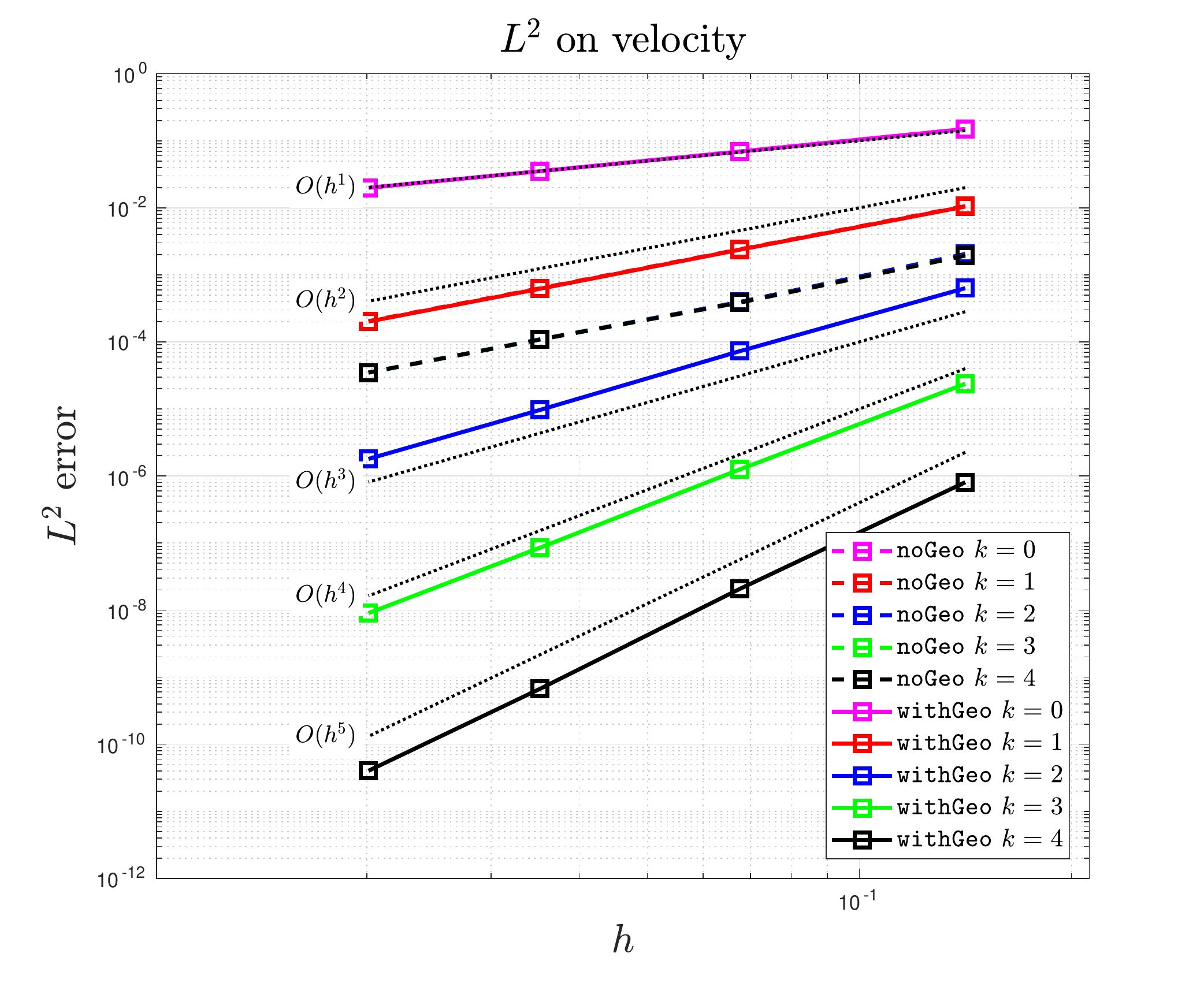} &
\includegraphics[width=0.49\textwidth]{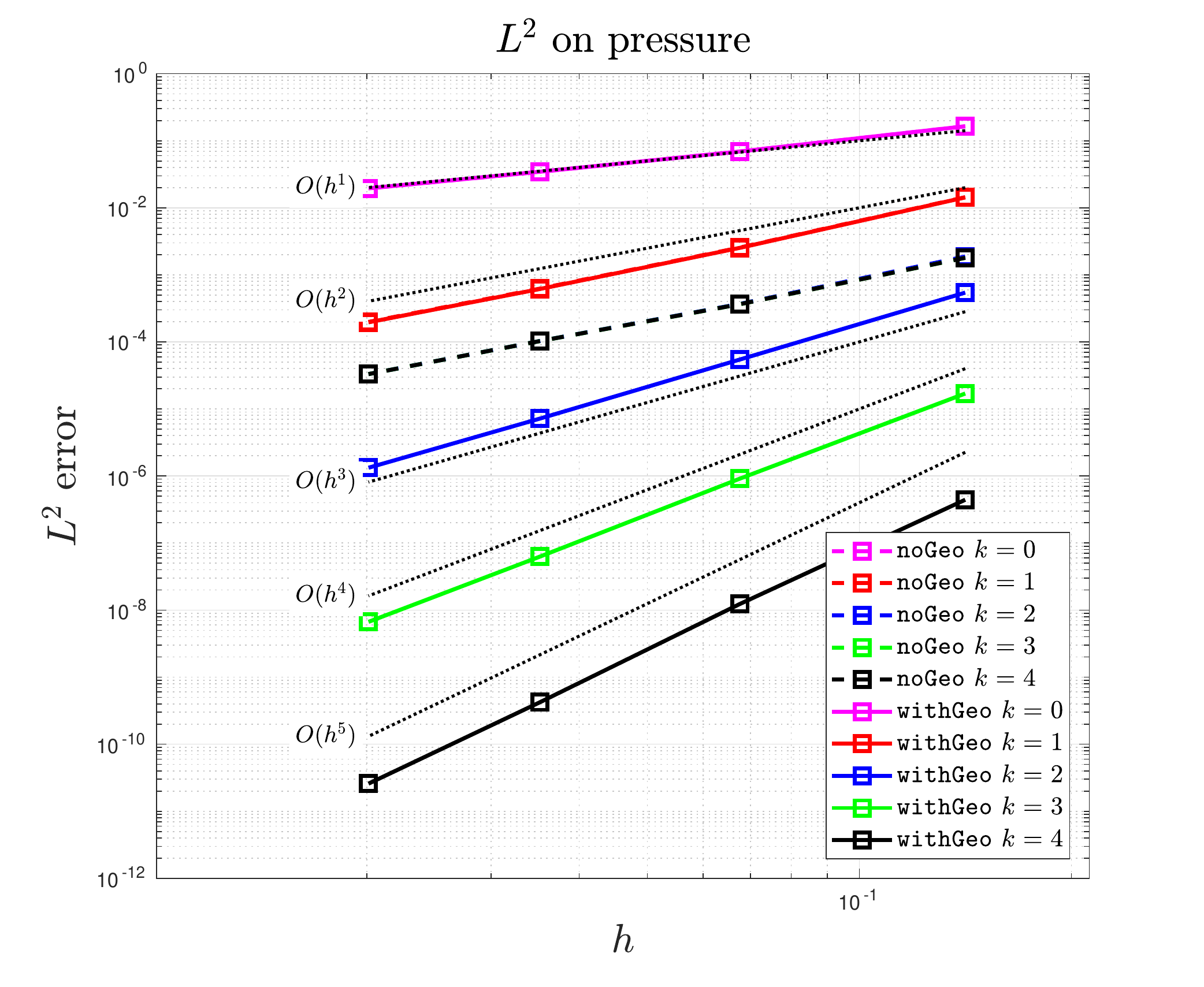}\\
\end{tabular}
\caption{Curved boundary: convergence lines for \texttt{hexR} meshes for each VEM approximation degrees.}
\label{fig:convExe1Bee}
\end{figure}

\begin{figure}[!htb]
\centering
\begin{tabular}{cc}
\multicolumn{2}{c}{\texttt{hexD}} \\
\includegraphics[width=0.49\textwidth]{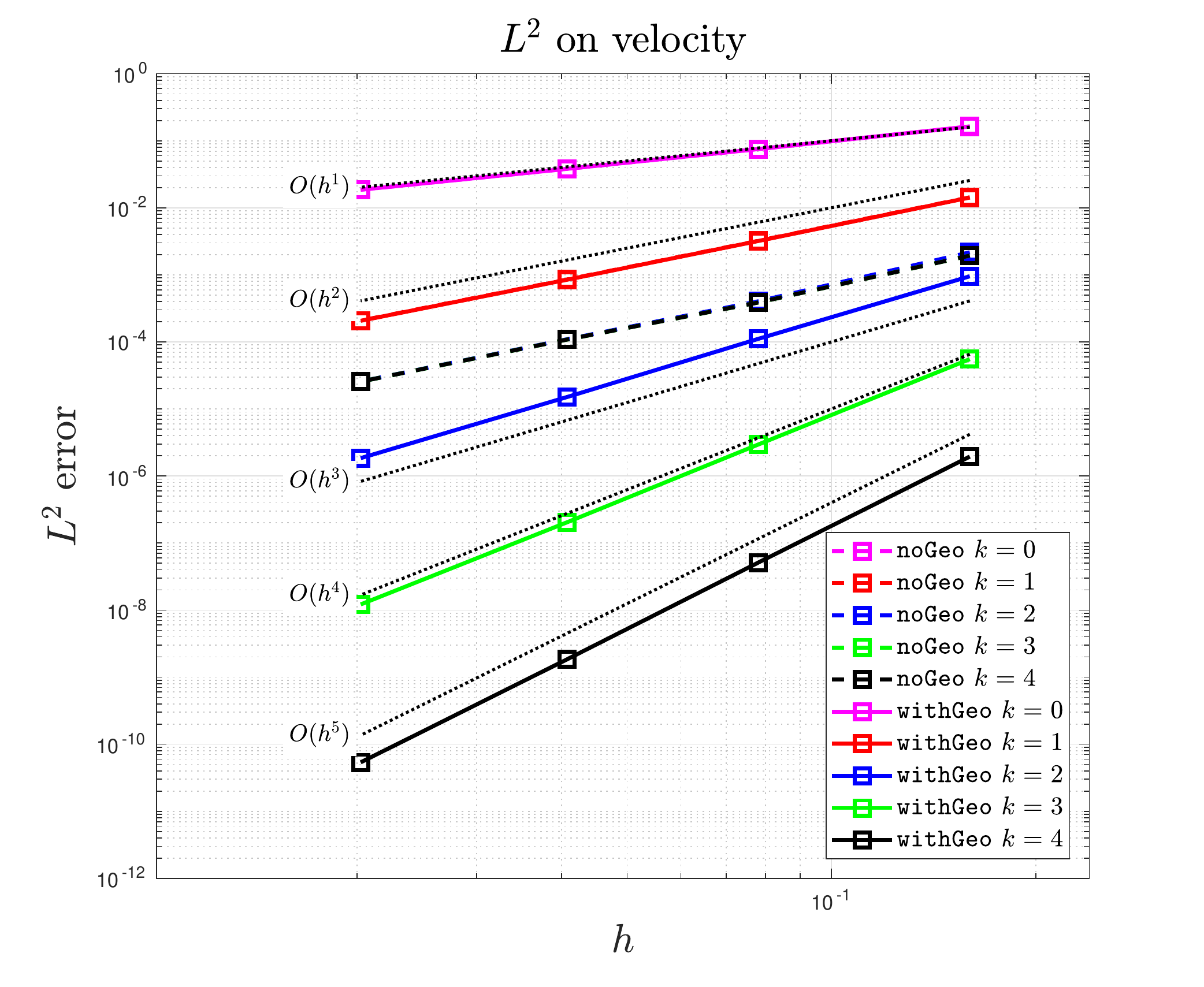} &
\includegraphics[width=0.49\textwidth]{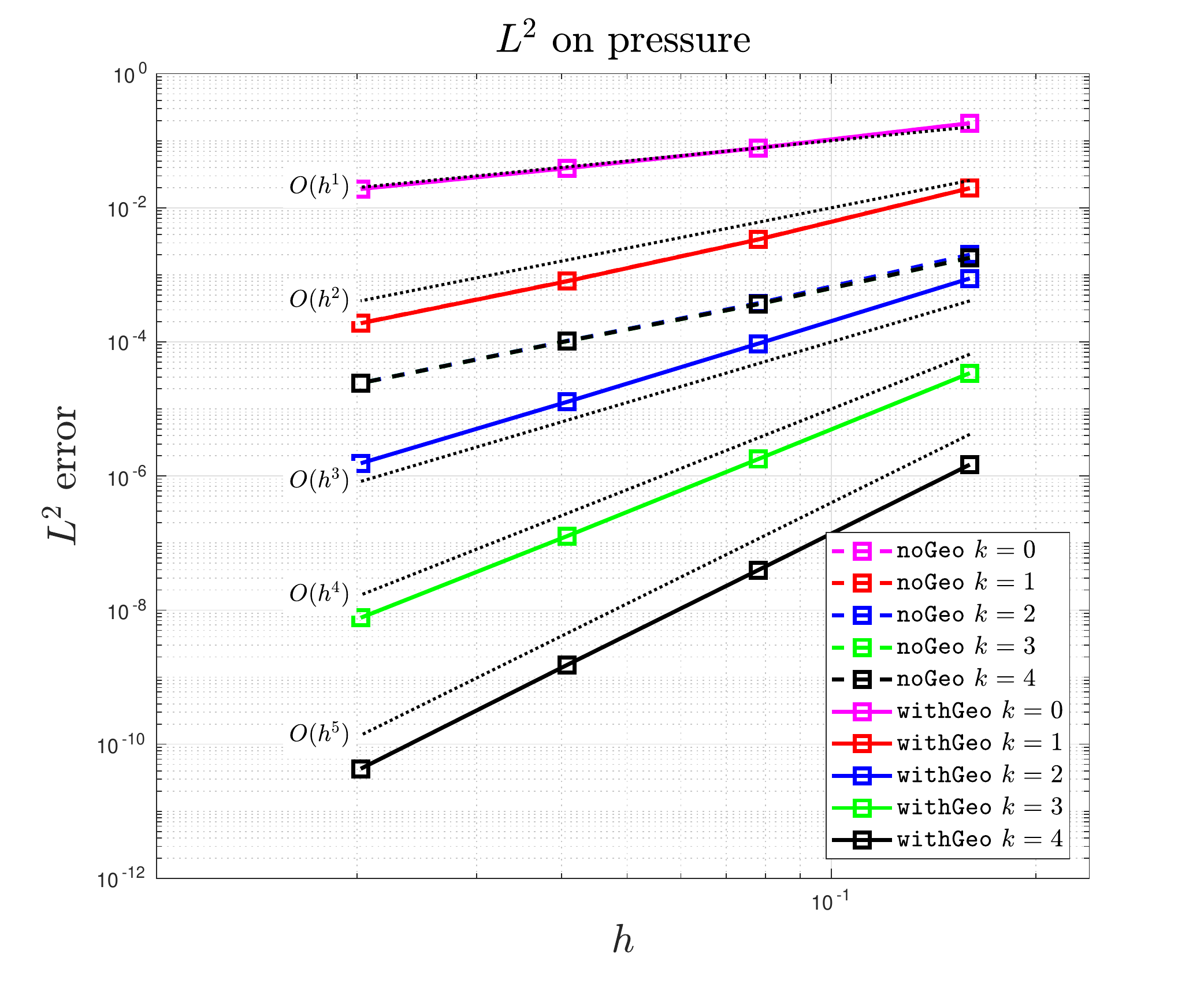}\\
\end{tabular}
\caption{Curved boundary: convergence lines for \texttt{hexD} meshes for each VEM approximation degrees.}
\label{fig:convExe1Hexa}
\end{figure}

\begin{figure}[!htb]
\centering
\begin{tabular}{cc}
\multicolumn{2}{c}{\texttt{voro}} \\
\includegraphics[width=0.49\textwidth]{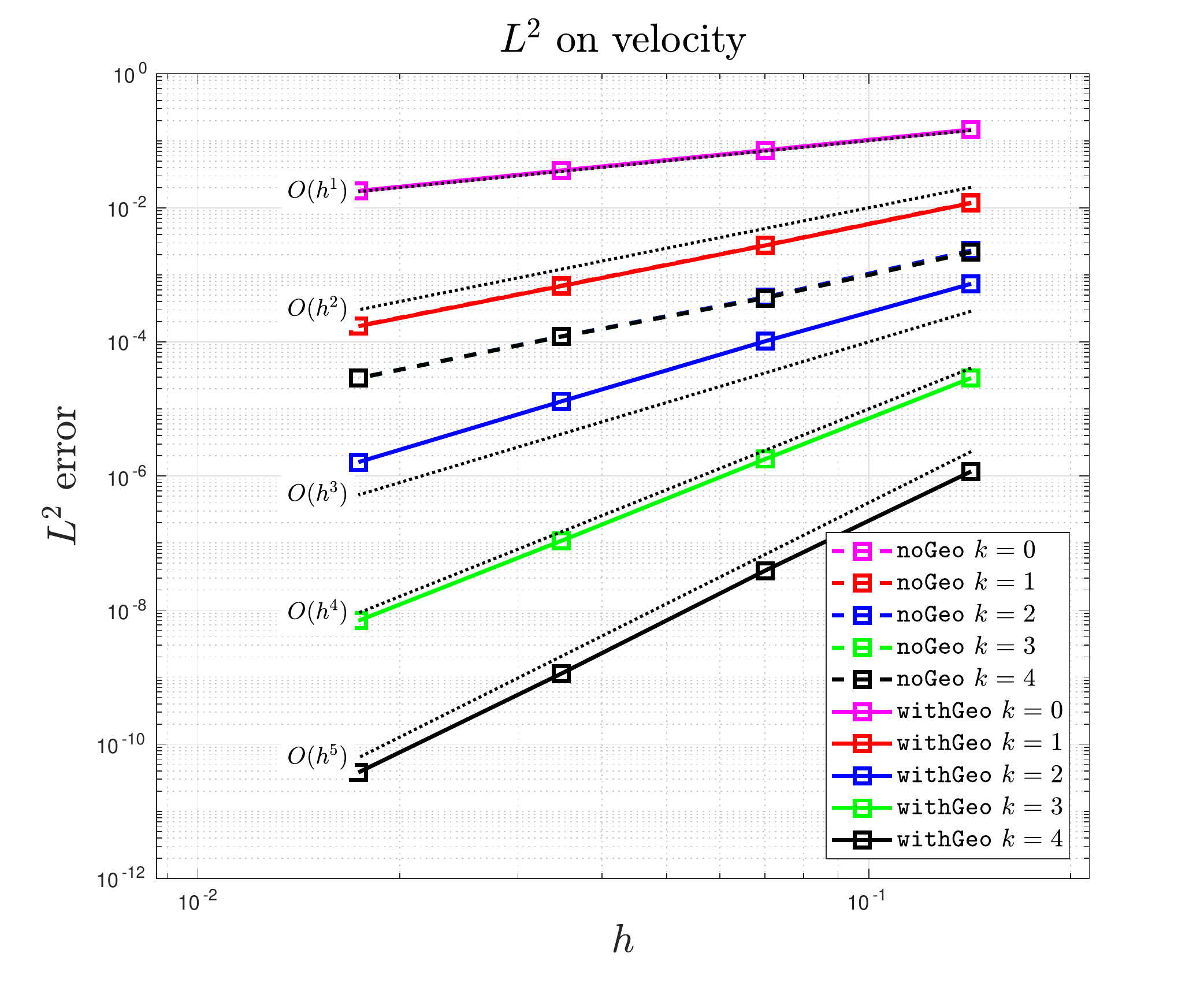} &
\includegraphics[width=0.49\textwidth]{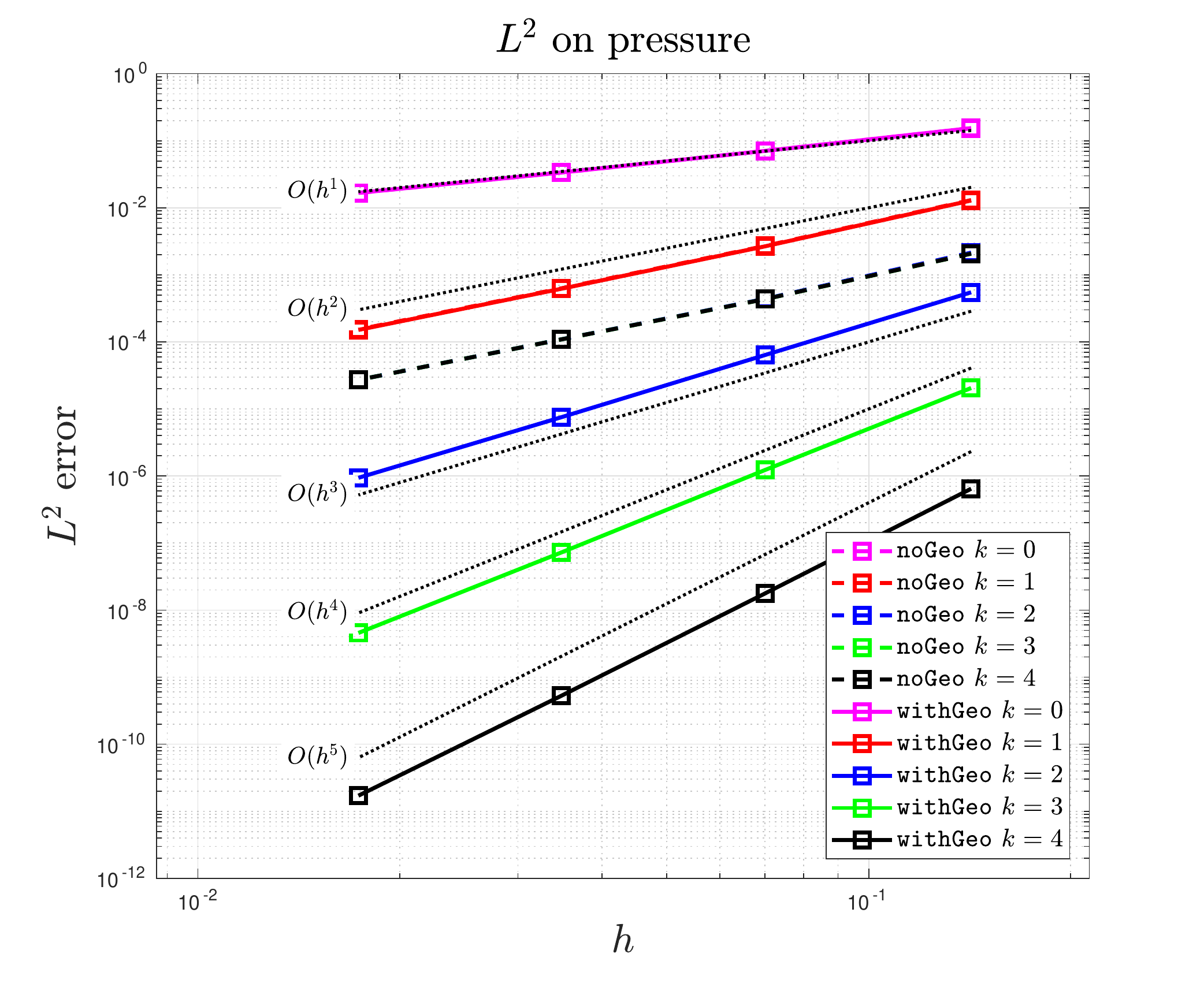}
\end{tabular}
\caption{Curved boundary: convergence lines for \texttt{voro} meshes for each VEM approximation degrees.}
\label{fig:convExe1Voro}
\end{figure}

%
%
%
%

\subsection{Internal curved interface}\label{sub:inside1}

\paragraph{Problem description}
In this subsection we consider again Problem~\ref{pb:darcy_model} defined on a different domain with respect to the previous example. The domain $\Omega$ is
shown in Figure~\ref{fig:domExe2} and consists of a unit square $\Omega =
(-1,\,1)^2$, $\Omega=\overline{\Omega_1}\cup\overline{\Omega_2}$, being $\Omega_2$ a circular inclusion with radius $R=0.45$ and $\Omega_1:=\Omega\backslash\Omega_2$ a circular crown.
Two different values of the tensor $\K= k \mathbb{I}$ are prescribed on each subdomain:
$k_1 = 1$ and $k_2 = 0.1$
for the subdomain $\Omega_1$ and $\Omega_2$, respectively,
while $\mu=1.$ on each subdomain.
We set the right hand side and the boundary conditions
in such a way that the exact solution for the pressure is
\begin{equation*}
p_1(x,\,y) = k_2\cos\left(\sqrt{x^2+y^2}\right) + \cos(R)\,(1 - k_2)
\end{equation*}
and
\begin{equation*}
p_2(x,\,y) = \cos\left(\sqrt{x^2+y^2}\right)\,,
\end{equation*}
for the subdomains $\Omega_1$ and $\Omega_2$, respectively.
Then, the exact solution for the velocity variable is given by
$$
\bm{q}_i(x,\,y) = -k_i(x,\,y) \nabla p_i(x,\,y)\,,\qquad\text{for }i=1,2\,.
$$
The pressure solution is chosen in such a way that we have a $C^0$ continuity on $\partial \Omega_2$,
and the velocity field has a $C^0$ continuity of the normal component across $\partial \Omega_2$, i.e.,
$$
p_1= p_2\qquad\text{and}\qquad \bm{q}_1\cdot\bm{n}_\iota + \bm{q}_2\cdot
\bm{n}_\iota = 0\qquad\text{on}\,\,\partial\Omega_2\,,
$$
where $\bm{n}_\iota$ is the normal of $\partial \Omega_2$ pointing from $\Omega_1$ to
$\Omega_2$.

\begin{figure}[!htb]
\centering
\subfloat[Domain]{\includegraphics[width=0.325\textwidth]{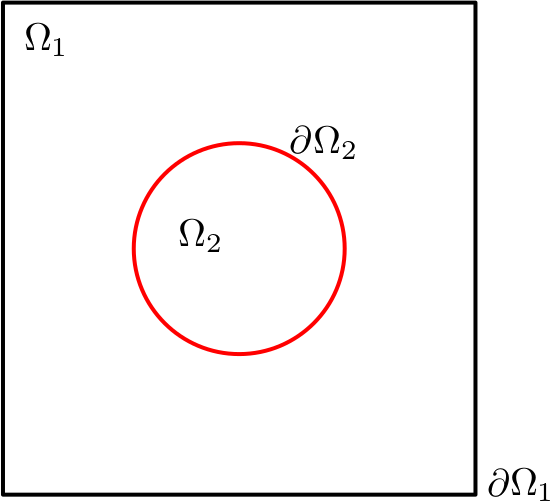}\label{fig:domExe2}}%
\hspace*{0.1\textwidth}%
\subfloat[Mesh]{\includegraphics[width=0.3\textwidth]{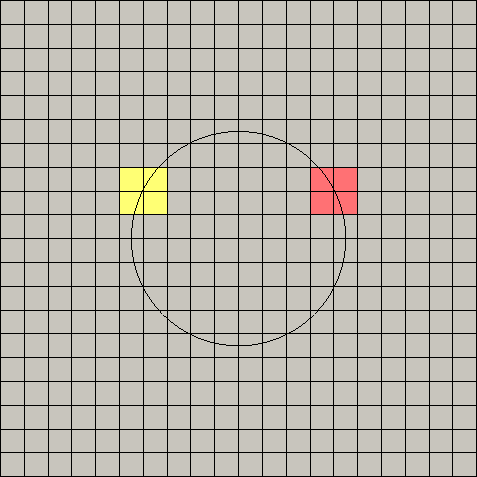}\label{fig:exe2Mesh}}%
\caption{Internal curved interface. On the left, domain $\Omega$ considered in such example,
curved boundaries are highlighted in red. On the right, the whole mesh with the internal curved boundary.
We show zooms of the yellow and red regions in Figure~\ref{fig:exe2MeshZoom}.}

\end{figure}

\paragraph{Meshes}
To generate the grid, we start again from a structured mesh composed of square
elements of the whole domain $\Omega$, independently of the internal interface
$\partial\Omega_2$, and \emph{then} we cut the mesh elements into sub-elements
according to $\partial\Omega_2$. {The geometry of the internal interface is
exactly reproduced in the proposed \texttt{withGeo} approach, whereas it is
replaced by straight edges in the \texttt{noGeo} approach}. In both cases,
thanks to the ability of virtual elements in dealing with arbitrary shaped
elements the mesh generation process is straightforward, as we do not need to
re-mesh elements crossed by the circle, but we simply cut each intersected
quadrilateral element into two new elements with one new (curved) edge,
\emph{without} taking care about the resulting shape and size of the two cut
elements.


The flexibility in including interfaces in the mesh and the robustness with respect to element size/distortion
are a huge advantage from the mesh generation point of view. In many applications a large number of possibly intersecting interfaces might be present in the computational domain, such that a robust and easy mesh generation process is of paramount importance. In such cases, the generation of good quality triangular meshes constrained to the interfaces might be an extremely complex task which might result in overly refined regions of the mesh, only needed to honour the geometry of the interfaces, independently from the desired accuracy level.
If we consider a virtual element approach, interfaces can be easily superimposed to an existing regular mesh, as shown above, avoid unnecessary refinements and consequently decreasing the degrees of freedom and the computational effort.

\begin{figure}[!htb]
\centering
\includegraphics[width=0.3035\textwidth]{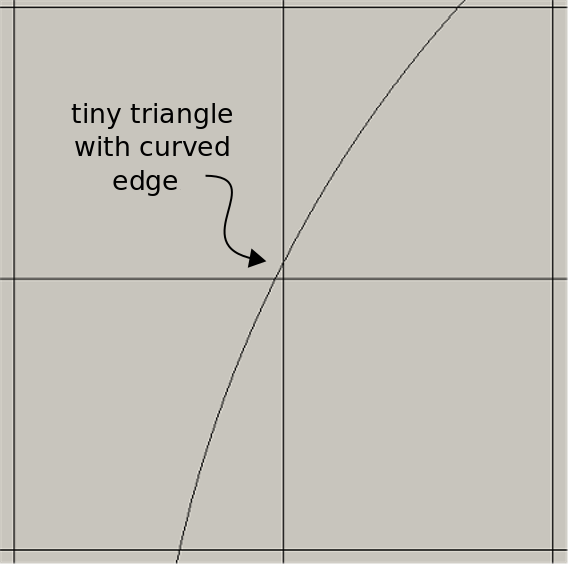}%
\hspace*{0.1\textwidth}%
\includegraphics[width=0.3035\textwidth]{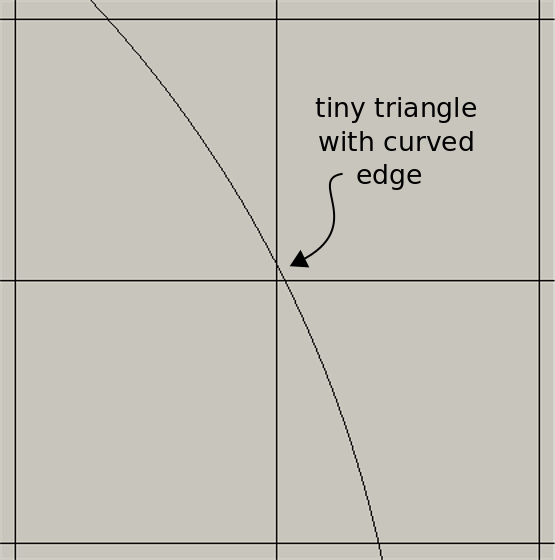}
\caption{Internal curved interface: zooms of the yellow and red regions of Figure~\ref{fig:exe2Mesh},
where we highlight tiny triangles with a curved edge.}
\label{fig:exe2MeshZoom}
\end{figure}

In Figure~\ref{fig:exe2MeshZoom} we show a detail of some cut elements.
Here we better appreciate that elements crossed by the interface are simply split in two parts and
there is no any further subdivision. Moreover, we notice that such meshing procedure might results in really tiny elements adjacent to big ones.
In each mesh of the following convergence analysis there are many elements with these characteristics and
we will see that the convergence trend of the method is not affected by them.

As a final remark, we would like to underline another interesting property of the proposed approach.
The proposed curved spaces are compatible with standard finite element discretizations. 
For instance it is possible to simply glue a standard Raviart-Thomas element with an element with curved edges along a straight edges, thus exploiting the proposed virtual element spaces \emph{only} on the elements with curvilinear edges and standard Raviart-Thomas discretization on elements with straight edges.

As we have done for the previous example we make a sequence of four meshes with decreasing mesh size $h$
to proceed with the convergence analysis.

\paragraph{Results}
In Figure~\ref{fig:exe2Res} we show the convergence lines for the
\texttt{withGeo} and \texttt{noGeo} approaches as $h$ is reduced, for values of
$k$ ranging between $0$ and $4$.  The behaviour of the error is similar to the
one shown in the previous example.  Indeed, in the \texttt{noGeo} case the
convergence is the optimal one for polynomial accuracy values $k=0$ and $1$,
while for $k>1$ the geometrical error dominates the VEM approximation error and
the trend remains bounded by $\Or{2}$.  On the contrary, when we consider the
virtual element spaces for curvilinear edges, optimal error decay $\Or{k+1}$ is
obtained for both velocity and pressure $L^2$ errors, for the used polynomial
accuracy $k$. {A pre-asymptotic behaviour is observed for the \texttt{withGeo}
approach for values of $k=2,3$ and 4, which however terminates in the considered
range of $h$ values for almost all cases.}


\begin{figure}[!htb]
\centering
\includegraphics[width=0.49\textwidth]{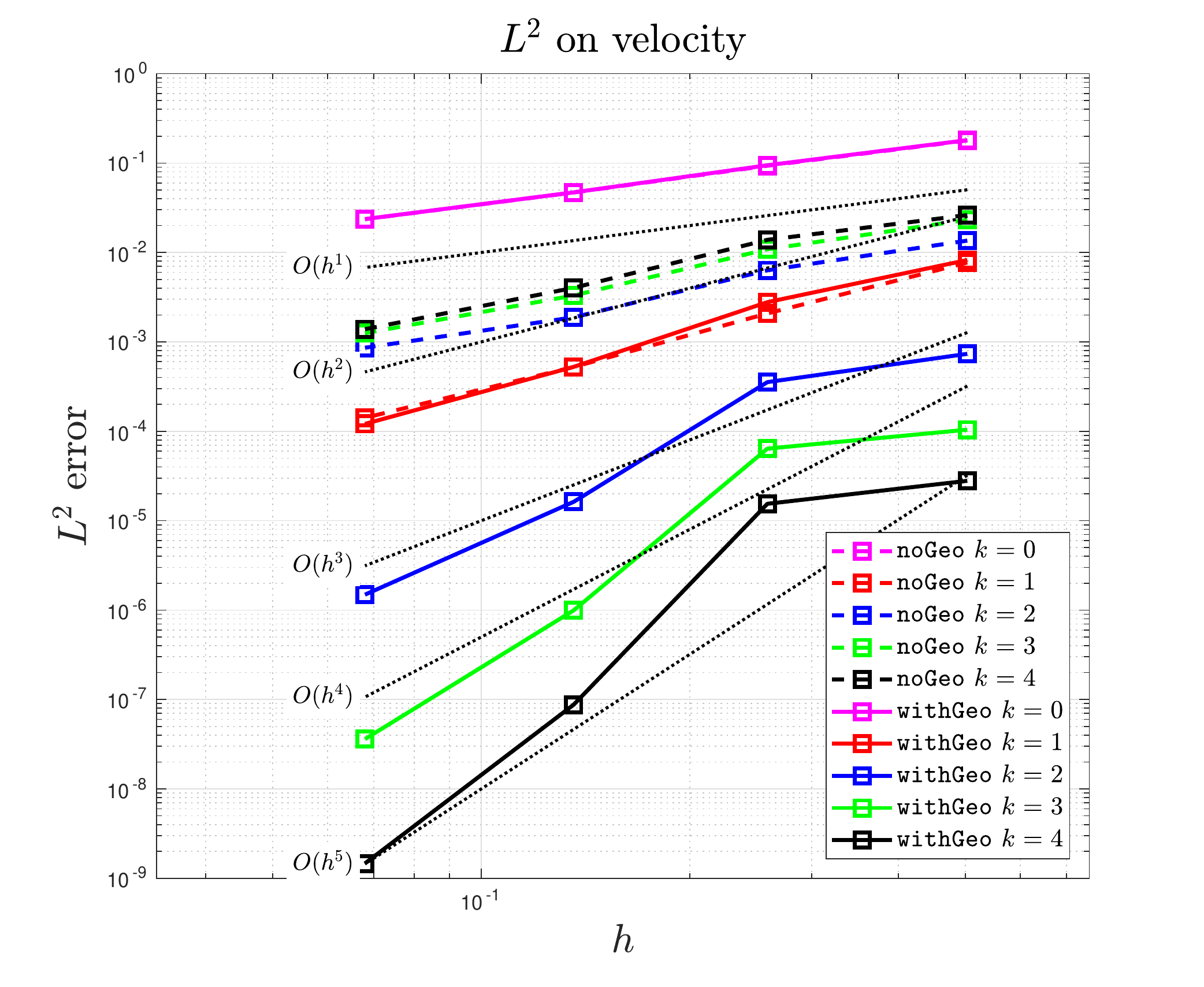}%
\includegraphics[width=0.49\textwidth]{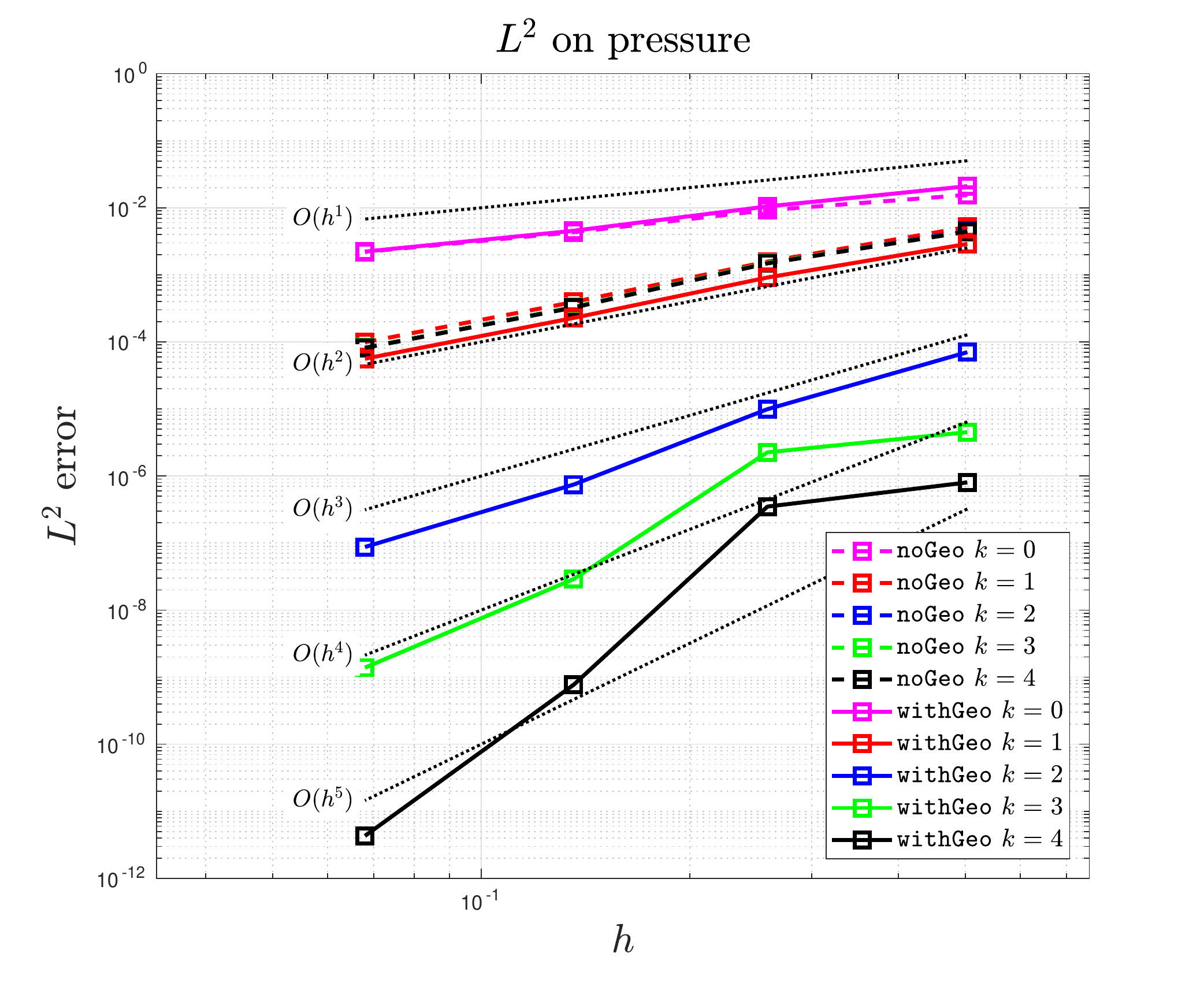}
\caption{Internal curved interface: convergence lines for each VEM approximation degrees.}
\label{fig:exe2Res}
\end{figure}

\subsection{Double internal curved interfaces}\label{sub:inside2}

\paragraph{Problem description}
In this example we consider two internal boundaries which identify three regions, $\Omega_1$, $\Omega_2$ and $\Omega_3$,
inside the square  $(-1,\,1)^2$, see Figure~\ref{fig:domExe3}.
Both internal boundaries are curved, i.e.,
$\Gamma_1$ and $\Gamma_2$ are defined as
$$
g_1(x) = a\sin(\pi x) + b\qquad\text{and}\qquad g_2(x) = a\sin(\pi x) - b\,,
$$
respectively. For this example we set $a=0.2$ and $b=0.31$.
Then, we set the right hand side of Problem~\ref{pb:darcy_model}
in such a way that the pressure solution is
\begin{eqnarray*}
p_1(x,\,y) &=& a\sin(\pi x)\,,\\
p_2(x,\,y) &=& a\,\sin\left\{\frac{\pi}{2b}[y-a \sin(\pi x)]\right\}\sin(\pi\,x)\,,\\
p_3(x,\,y) &=& -a \sin(\pi x)\,,
\end{eqnarray*}
and the velocity $\bm{q}_i(x,\,y) = - \nabla p_i(x,\,y)$ on each subdomain $\Omega_i$ for $i=1,2$ and $3$.
Both velocity and pressure functions are chosen in such a way that we have a $C^0$ continuity for the pressure
and for the normal component of the velocity on the curves $\Gamma_1$ and $\Gamma_2$, i.e.,
\begin{gather*}
    p_1= p_2\quad\text{and}\quad \bm{q}_1\cdot\bm{n}_1 +
    \bm{q}_2\cdot\bm{n}_1= 0\qquad\text{on}\,\,\Gamma_1\,,\\
    p_2= p_3\quad\text{and}\quad \bm{q}_2\cdot\bm{n}_2 +
    \bm{q}_3\cdot\bm{n}_2 = 0\qquad\text{on}\,\,\Gamma_2\,,
\end{gather*}
where $\bm{n}_{1}$ is the normal of $\Gamma_1$ pointing from $\Omega_1$ to
$\Omega_2$ and $\bm{n}_2$ is the normal of $\Gamma_2$ pointing from $\Omega_2$
to $\Omega_3$.

\begin{figure}[!htb]
\centering
\subfloat[Domain]{\includegraphics[width=0.3\textwidth]{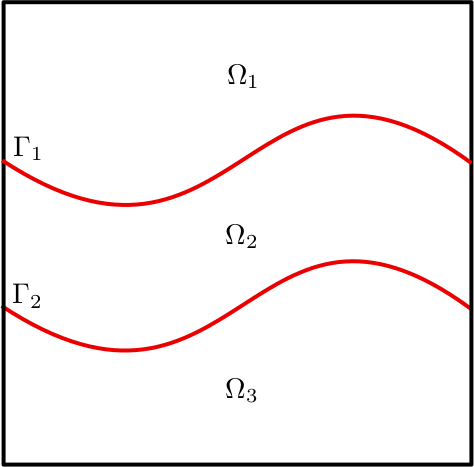}\label{fig:domExe3}}%
\hspace*{0.1\textwidth}%
\subfloat[Mesh]{\includegraphics[width=0.3\textwidth]{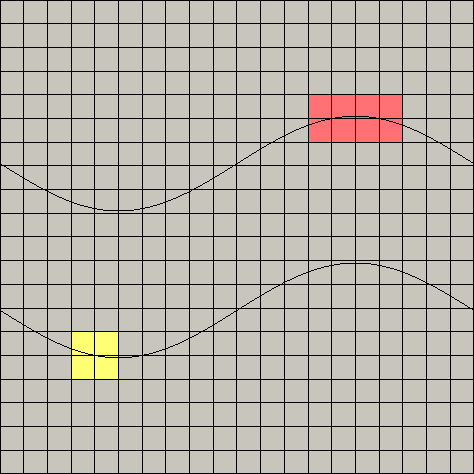}\label{fig:exe3Mesh}}%
\caption{Double internal curved interfaces. On the left, domain $\Omega$ considered in such example,
curved boundaries are highlighted in red. On the right, the whole mesh with the internal curved boundaries.
We show zooms of the yellow and red regions in Figure~\ref{fig:exe3MeshZoom}.}
\end{figure}

\paragraph{Meshes}
To generate the meshes, we follow the same idea as the example of
Subsection~\ref{sub:inside1}.  We build a background mesh composed by squares
and then we insert the curved internal interfaces, as shown in
Figure~\ref{fig:exe3Mesh}.  This is done, as previously, independently from the
background mesh, and thus the resulting meshes are composed by elements with
arbitrary size and shape, see Figure~\ref{fig:exe3MeshZoom}.  {Also in this
case, mesh element edges lying on the curvilinear interfaces exactly match the
interface for the \texttt{withGeo} approach, whereas they are approximated by
straight edges in the \texttt{noGeo} case.}

\begin{figure}[!htb]
\centering
\includegraphics[height=0.25\textwidth]{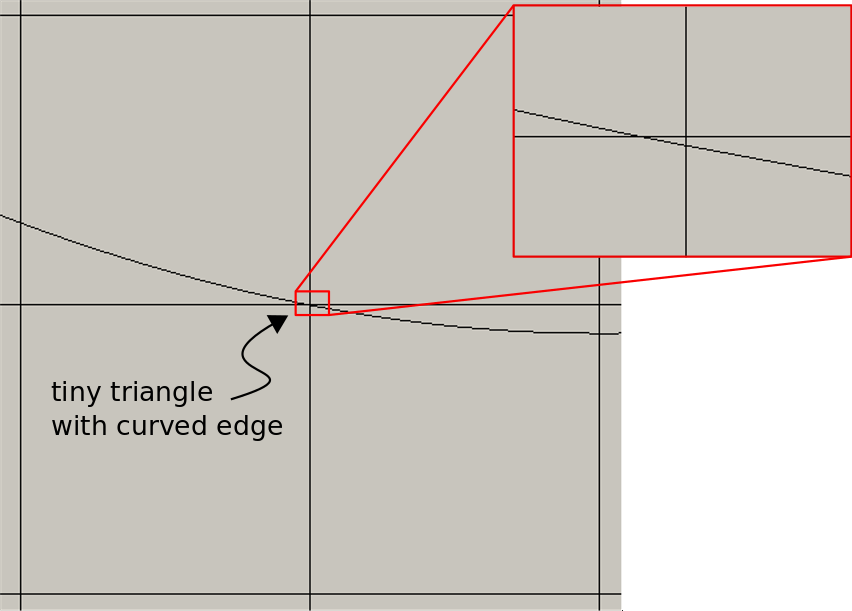}%
\hspace*{0.05\textwidth}%
\includegraphics[height=0.28\textwidth]{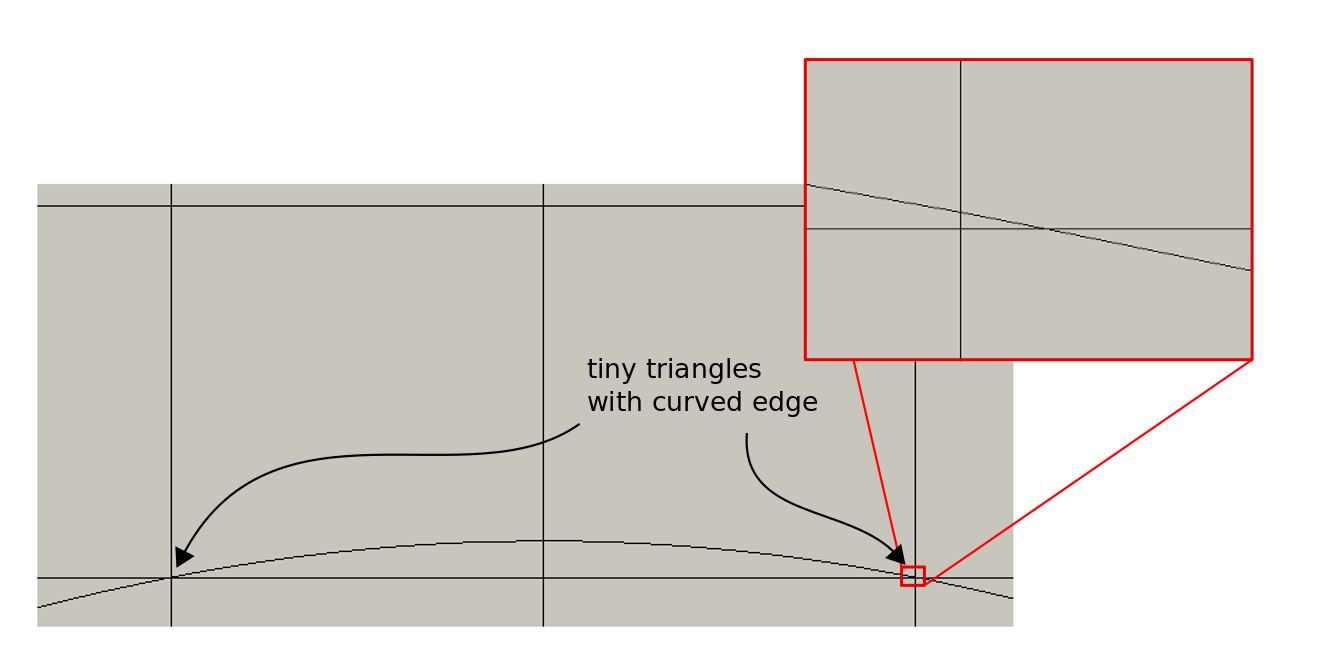}
\caption{Double internal curved interfaces: zooms of the yellow and red regions of Figure~\ref{fig:exe3Mesh},
where we highlight tiny triangles with a curved edge.}
\label{fig:exe3MeshZoom}
\end{figure}

%
%

\paragraph{Results}
In Figure~\ref{fig:exe3Res} we show convergence lines for both the \texttt{withGeo} and \texttt{noGeo} for values of $k=0,\ldots,4$.
The behaviour of error decay is again as expected: in the \texttt{noGeo} case error decay follows the expected trend for the used polynomial accuracy only for $k\leq1$, being, for $k>1$, always $\Or{2}$ for the prevailing effect of the geometrical error.
On the contrary, since appropriate basis functions are included in the definition of the approximation space in the proposed \texttt{withGeo} approach, optimal error decay is observed for the used polynomial accuracy level.

\begin{figure}[!htb]
\centering
\includegraphics[width=0.49\textwidth]{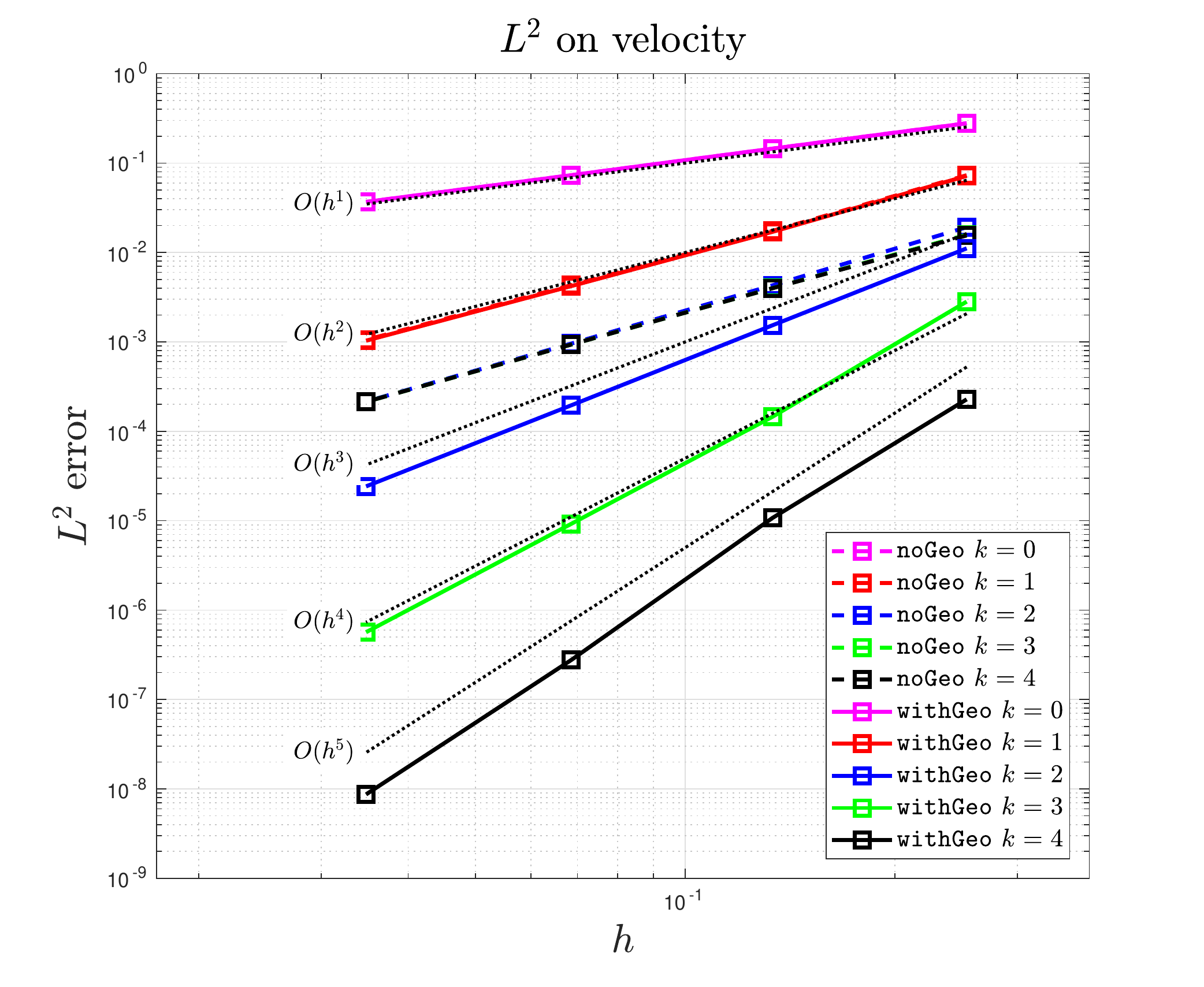}%
\includegraphics[width=0.49\textwidth]{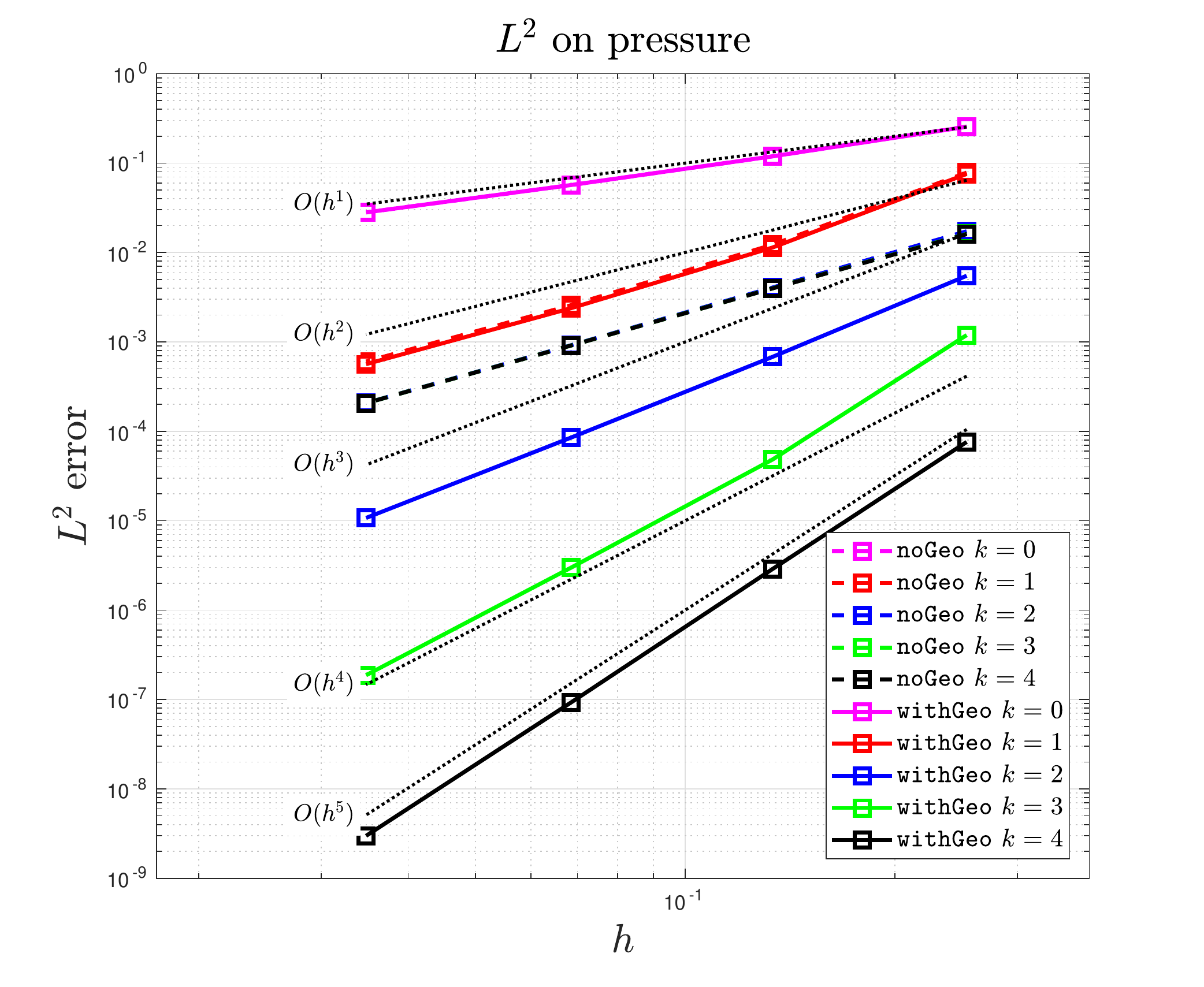}
\caption{Double internal curved interfaces: convergence lines for each VEM approximation degrees.}
\label{fig:exe3Res}
\end{figure}

\section{Conclusions}\label{sec:conclusion}

In this work we have performed a first analysis on the extension of the mixed
virtual element method to grids where elements might have curved edges, for
elliptic problems in 2D. A theoretical analysis is proposed to show
well-posedness of the discrete problem. A choice for the degrees of freedom
particularly well suited for discretizations on curvilinear edge elements is
highlighted, and a numerical scheme is proposed that handles in a coherent and
consistent way the geometry, thus exhibiting optimal error decay in accordance
to the polynomial accuracy level of the approximation.  This is particularly
suited for real applications where the geometrical error might dominate and
limit the accuracy of the numerical solution.  The numerical examples are in
accordance with the theoretical findings and showed the optimal error decay for
a domain with curved boundary and a domain with internal interfaces in contrast
with the standard mixed virtual element method where the geometrical error
jeopardizes the performances.  Natural extension of the current work are the
introduction of the mixed virtual element method for three-dimensional problems
with curved faces and for more general problems.

\section*{Acknowledgments}
The authors acknowledge financial support of INdAM-GNCS through project ``Bend VEM 3d'', 2020.
Author S.S. also acknowledges the financial support of MIUR through project ``Dipartimenti di
Eccellenza 2018-2022'' (Codice Unico di Progetto CUP E11G18000350001).

\bibliographystyle{plain}
\bibliography{biblio_no_url}

\end{document}